\documentclass[letterpaper,12pt]{article}

\usepackage{amsmath,amssymb,amsthm,bbm}
\usepackage{hyperref}
\hypersetup{colorlinks=false,pdfborder={0 0 0}}
\usepackage{color}
\usepackage{float}
\usepackage{graphicx}
\usepackage[small]{caption}
\usepackage[margin=1in]{geometry}
\usepackage{algorithm,algorithmic}
\usepackage{parskip}
\usepackage{young}
\usepackage{url}
\usepackage{bbm}

\renewcommand\P{\mathbb P}

\newcommand\cL{\mathcal L}

\newcommand\ignore[1]{ }

\newtheorem{theorem}{Theorem}[section]
\newtheorem{lemma}[theorem]{Lemma}

\newtheorem{proposition}[theorem]{Proposition}
\theoremstyle{definition}

\begingroup
    \makeatletter
    \@for\theoremstyle:=definition,remark,plain\do{%
        \expandafter\g@addto@macro\csname th@\theoremstyle\endcsname{%
            \addtolength\thm@preskip\parskip
            }%
        }
\endgroup

\begin{document}

\def\spacingset#1{\renewcommand{\baselinestretch}%
{#1}\small\normalsize} \spacingset{1}

\def\LS{LS}

\parskip 10pt

\begin{center}
\LARGE Exact sampling algorithms for Latin squares and Sudoku matrices via probabilistic divide-and-conquer\\[.3cm]
\normalsize Stephen DeSalvo\footnote{Department of Mathematics, University of California Los Angeles. stephendesalvo@math.ucla.edu}\\[.1cm]
\today\\[.5cm]
\parbox{.8\textwidth}{
\small
\textit{Abstract.}
We provide several algorithms for the exact, uniform random sampling of Latin squares and Sudoku matrices via probabilistic divide-and-conquer (PDC).  
Our approach divides the sample space into smaller pieces, samples each separately, and combines them in a manner which yields an exact sample from the target distribution. 
We demonstrate, in particular, a version of PDC in which one of the pieces is sampled using a brute force approach, which we dub \emph{almost deterministic second half}, as it is a generalization to a previous application of PDC for which one of the pieces is uniquely determined given the others. 
}
\end{center}
{
\small
\smallskip
\noindent \textbf{Keywords.} Random Sampling, Latin Square, Sudoku, Probabilistic Divide-and-Conquer, Rejection Sampling

\noindent \textbf{MSC classes:} 60C05, 65C50, 60-04, 
}

\section{Introduction}

The random sampling of combinatorial structures is an active topic, with many available techniques. 
Several general methods have emerged which supply efficient algorithms in many cases of interest, e.g., rejection sampling~\cite{Rejection}, Boltzmann sampling~\cite{Boltzmann}, Markov chains~\cite{huber2015perfect, JerrumSinclair, MarkovChainBook, ProppWilson}.  
Each method presents difficulties: rejection sampling provides exact samples in finite time, but that finite time is often too large to be useful in practice; Boltzmann sampling produces an object which does not always satisfy the restrictions; rapidly mixing Markov chains are often easily fashioned to a problem, along with bounds on the mixing time (though not always), but they are notably inexact in finite time; 
and while Markov chain coupling from the past is an exact sampling method, it requires constructing a coupling over the complete set of objects, which can be practical when there is some type of monotonicity. 

There are many effective approaches to random sampling from the set of $9\times 9$ Sudoku matrices, which is defined as the collection of $9 \times 9$ integer-valued matrices such that each row and column is a permutation of the set $\{1,\ldots, 9\}$, and certain $3\times 3$ sub-blocks (see~\eqref{subblocks}) are also permutations of $\{1,\ldots, 9\}$. 
An efficient backtracking algorithm is utilized in \cite{DeSalvoSudoku}, though no quantitative bounds are given for the bias.  
Another approach is to fashion a Markov chain with uniform stationary distribution; this was described in~\cite{SudokuMC}, although it was not proved to be rapidly mixing.
Importance sampling delivers a collection of Sudoku matrices as well as weights which allows one to calculate unbiased estimates of statistical parameters; this was done in \cite{RidderIS}.  

We present several algorithms which are an application of probabilistic divide-and-conquer (PDC) \cite{PDC}. 
The central idea is the division of the sample space into two smaller parts, each of which can be sampled separately, and then appropriately pieced back together to form an exact sample. 
The method is versatile enough to be adapted to other types of numerical tables, and we have chosen the set of Latin squares as a natural complementary structure; a Latin square of order~$n$ is an $n \times n$ integer-valued matrix such that each row and column is a permutation of $\{1,\ldots, n\}$. 
There is also a generalization of Sudoku matrices which can be sampled using PDC, presented in Section~\ref{Sudoku:RC}. 

There are many ways to implement PDC, and in this paper we implement a version called \emph{almost deterministic second half}, which divides the sample space into two pieces, one of which can be sampled efficiently using a brute force approach.   There are other PDC algorithms: self-similar PDC (see~\cite[Section~3.5]{PDC}) often provides an asymptotically efficient algorithm, but requires detailed information about the sample space; PDC with deterministic second half (see~\cite[Section~3.3]{PDC},~\cite{PDCDSH}) requires almost no information about the sample space, and is such that given one of the pieces, the second piece is uniquely determined; PDC with the recursive method (see~\cite{DeSalvoImprovements}) is such that given one of the pieces, the second piece can be efficiently sampled using the recursive method~\cite{NW, NWBook}; the advantage of our current approach is that it can be customized to suit the available knowledge of the sample space.
When we do have detailed knowledge about the sample space, e.g., the number of ways of completing a partially generated object, as is the case for Sudoku matrices, we show how one can more optimally design a PDC algorithm.

The random sampling of Latin squares and Sudoku matrices is a notoriously difficult problem, and we can only claim in the present work to have made modest improvements to existing algorithms. 
PDC offers a versatile method for approaching the problem, and allows the fashioner to design an algorithm tailored to the particular computational aspects of the problem available. 
In addition, our algorithms do not require extensive auxiliary constructions or complicated transformations. 

The paper is organized as follows.
In Section~\ref{definitions} we give the definitions of a Latin Square of order $n$ and a Sudoku matrix.
In Section~\ref{PDC}, we introduce PDC and present some simple rejection sampling algorithms. 
In Section~\ref{section:Sudoku} we present a PDC algorithm for the random sampling of Sudoku matrices, an analysis of the cost, and some alternative PDC parameterizations. 
We do the same for Latin squares of order~$n$ in Section~\ref{section:Latin:squares}.
In Section~\ref{section:other} we review other approaches, and in Section~\ref{final:remarks} we explain our initial motivation and provide a link to a code implementation in C++. 

\section{Definitions}\label{definitions}

For any $n \geq 1$, a Latin square of order~$n$ is an $n \times n$ matrix which satisfies the following row and column constraints, heretofore referred to as the \emph{Latin square conditions}:
\begin{itemize}
\item each row, labelled $R_1, \ldots, R_n$ is a permutation of $\{1,2,\ldots, n\}$;
\item each column, labelled $C_1, \ldots, C_n$ is a permutation of $\{1,2,\ldots, n\}$.
\end{itemize}
We let $\LS_n$ denote the set of all Latin squares of order $n$.  

By a \emph{partially completed} matrix, e.g., partially completed Latin square, we mean a matrix that has some subset of entries filled in which do not a priori violate the conditions. 
No assumption is implied in general as to whether such a matrix can be completed. 
We shall primarily be interested in partially completed matrices whose first $k$ rows are filled in, although there are several possible applications of PDC which could be exploited in more general circumstances, see Section~\ref{other:PDC:algorithms:LS}. 
A partially completed Latin square of order~$n$ with the first $k$ rows filled in is called a \emph{$k \times n$ Latin rectangle}, for which much is already known, see for example~\cite{godsil1990asymptotic}. 

A Sudoku matrix is a Latin square of order $9$ which also satisfies the additional block constraints: 
there are nine $3 \times 3$ sub-blocks, labelled $B_1, B_2, \ldots, B_9$, each of which is a permutation of $\{1,2,\ldots, 9\}$; these blocks are indicated below.
\begin{equation}\label{subblocks}
\begin{array}{|ccc|ccc|ccc|}\hline
 &  &  &  &  &  &  &  &  \\
 & B_1 & & & B_2 & & &B_3 & \\
 &  &  &  &  &  &  &  &  \\ \hline
 &  &  &  &  &  &  &  &  \\
 & B_4 & & & B_5 & & &B_6 & \\
 &  &  &  &  &  &  &  &  \\ \hline
 &  &  &  &  &  &  &  &  \\
 & B_7 & & & B_8 & & &B_9 & \\
 &  &  &  &  &  &  &  &  \\ \hline
\end{array}
\end{equation}
We refer to these three constraints -- row, column and block -- as the \emph{Sudoku conditions}, and we let~$S$ denote the set of all Sudoku matrices.  

In the calculations that follow, $a \approx b$ is simply the elementary meaning where $a$ and/or $b$ have been rounded to some nearest value.  
For a finite set $A$, the notation $|A|$ will denote its cardinality, i.e., the number of elements in the set~$A$. 
In addition, we introduce notation for asymptotic analysis: suppose $f(n)$ and $g(n)$ are positive functions of a positive integer $n$, then
\begin{itemize}
\item we say $f(n) = O(g(n))$ if there exists a $C>0$ such that $\frac{f(n)}{g(n)} < C$ for all $n$ larger than some $n_0$;
\item we say $f(n) = o(g(n))$ if $\frac{f(n)}{g(n)} \to 0$ as $n$ tends to infinity;
\item we say $f(n) \sim g(n)$ if $\lim_{n\to\infty} \frac{f(n)}{g(n)} = 1$.  
\end{itemize}
For real-valued $x$, let 
\begin{itemize}
\item $[x]$ denote the integer closest to $x$; i.e., $x$ rounded to the nearest integer;
\item $\lfloor x \rfloor$ denote the largest integer smaller or equal to $x$, also known as the \emph{floor} of $x$.
\end{itemize}

\section{Probabilistic divide-and-conquer}
\label{PDC}

We begin by highlighting various simple algorithms for obtaining an element of $S$, a Sudoku matrix, uniformly at random (u.a.r.) over all possibilities. 
Each of the approaches can be adapted to sampling an element of $LS_n$ in a straightforward manner, and so we specialize to $S$ for concreteness. 

\begin{enumerate}
\item Sample each entry independent, and identically distributed (i.i.d.)~uniform over the set $\{1,2,\ldots,9\}$; restart if any of the Sudoku conditions are not satisfied.
\item Sample each row (or column or block) as i.i.d.~uniform over the set of all permutations of $\{1,2,\ldots, 9\}$; restart if any of the Sudoku conditions are not satisfied.
\item Let the first row be $(1,2,\ldots, 9)$.  Sample rows 2 through 8 as i.i.d.~uniform over the set of all fixed--point free permutations of $\{1,2,\ldots, 9\}$; if row 9 can be completed, with all Sudoku conditions satisfied, then complete it, and finally apply a random permutation of $\{1,\ldots, 9\}$ to all entries and return the matrix; otherwise, restart.
\end{enumerate}
These algorithms are generally known as \emph{rejection algorithms}, since the target set of objects $S$, also referred to as the target region, lies within the complete sample space $\Omega$.
Since the algorithm generates a sample uniformly over all elements of $\Omega$, any Sudoku matrix generated by such a hard rejection algorithm is also uniform over $S$ (see, e.g.,~\cite{Yordzhev2} for a more detailed analysis in a more general setting).

The expected number of times one must sample elements of $\Omega$ before obtaining a sample from $S$ is simply the quotient $|\Omega| / |S|$. 
It was shown in \cite{FelgenJarvis} that the number of Sudoku matrices is exactly 
\[ |S| = 6670903752021072936960 \approx 6.67 \times 10^{21}. 
\]
This number is too large to simply list all of the elements and select one at random.

There are $9^{81} \approx 2.0 \times 10^{77}$ matrices in $\Omega_1 := \{1,\ldots,9\}^{9\times 9}$, so for the first, most na\"ive approach of i.i.d.~entry sampling, the expected number of times one must sample from $\Omega_1$ is $|\Omega_1|/|S| \approx 3.3\times 10^{56}$.
There are $|\Omega_2| := \binom{9!}{9}\approx 3.0\times 10^{44}$ different ways of selecting $9$ distinct permutations of $\{1,2,\ldots, 9\}$, whence, $|\Omega_2|/|S| \approx 4.5\times 10^{22}$.  

The third algorithm is the most respectable of the three, as it eliminates two of the rows and reduces the number of possible permutations from $9!$ to $[9!/e]$. 
However, since we are fixing the first row, we can only sample from those elements of $S$ which also have the same first row; thus, we must normalize by $|S|/9!$ instead of $|S|$.
We have $\Omega_3 := \{$the set of all collections of seven fixed--point free permutations of $\{1,2,\ldots,9\}\}$, hence,
\[ \frac{|\Omega_3|}{|S|/9!} \approx \frac{ (9!/e)^7 }{1.8\times 10^{16}} \approx 4.2\times 10^{19}. \]
None of these rejection rates is practical.

At this stage, there are many different ways to proceed.
One approach is to relax the demand that the distribution on elements of $S$ is \emph{exactly} uniform, and instead adopt algorithms which are faster but not guaranteed to be uniform.
Another approach is to take advantage of more detailed properties of the elements in $S,$ which we now explore using PDC.

Rejection sampling algorithms can very often be improved, with very little extra information known about the target region, using PDC~\cite{PDC}.  
We assume a decomposition of the sample space $\Omega$ of the form $\Omega = \mathcal{A} \times \mathcal{B}$, where elements in $\Omega$ can be sampled using a distribution on $\mathcal{A}$, and a distribution on $\mathcal{B}$, such that random variables $A \in \mathcal{A}$ and $B \in \mathcal{B}$ are mutually independent; we denote their distributions by $\cL(A)$ and $\cL(B)$, respectively.  
In addition, we assume the target distribution on $S \subset \Omega$, denoted by $\cL(S)$, is of the form 
\[ \cL(S) = \cL( (A,B) | h(A,B)=1 ), 
\]
where $h: \mathcal{A}\times \mathcal{B} \to \{0,1\}$ is some (measurable) functional which determines whether or not the pair $(A,B)$ lies in the target set.
The PDC Lemma \cite[Lemma 2.1]{PDC} states that, assuming $\mathbb{P}(h(A,B)=1) > 0$, we have $\cL(S) = \cL(X,Y)$, where 
\[ \cL(X) = \cL( A | h(A,B) = 1), \qquad \cL(Y | X = x) = \cL(B | h(x,B) = 1). \]
An algorithm to sample from $\cL(S)$ is then as follows (see~\cite[Algorithm~2]{PDC}): 
\begin{enumerate}
\item Generate sample from $\cL(A\, |\, h(A,B) = 1),$ call it $x$.
\item Generate sample from $\cL(B\, |\, h(x,B) = 1),$ call it $y$.
\item Return $(x,y)$.
\end{enumerate} 

Often, however, the conditional distributions are not known, and so the more practical PDC algorithm utilizes von Neumann's rejection sampling approach \cite{Rejection}, which allows us to sample from the conditional distribution $\cL(A | h(A,B)=1)$ using $\cL(A)$ and a biased coin.  

\begin{algorithm}[H]{\rm
\begin{algorithmic}[1] 
\STATE Fix any $\alpha$ such that $\displaystyle \max_{a\in \mathcal{A}} \P(h(a,B)=0) \leq \alpha \leq 1$.  \\
\STATE For each $a \in \mathcal{A}$, define $\displaystyle t(a) := \frac{\P(h(a,B)=1)}{\alpha}$. \\
\STATE\label{line:A} Generate sample from $\cL(A),$ call it $a$.
\STATE Reject $a$ with probability $1-t(a)$; \ otherwise, restart from Line~\ref{line:A}.
\STATE Generate sample from $\cL(B\, |\, h(a,B) = 1),$ call it $y$.
\STATE Return $(a,y)$.
\end{algorithmic}}
\caption{\cite{PDC} Probabilistic Divide-and-Conquer via von Neumann} 
\label{PDC procedure von Neumann}
\end{algorithm}

Presently, in order to sample directly from the conditional distribution $\cL(B\, |\, h(a,B) = 1),$ we choose $\mathcal{A}$ such that $\cL(B\, |\, h(a,B) = 1)$ can be sampled effectively using a brute force approach; we call this \emph{PDC with almost deterministic second half}.  
Also, since the rejection probability $t(a)$ must satisfy, for some $0<\alpha\leq 1$, 
\[ t(a) =  \frac{\P( h(a,B) = 1)}{\alpha} \leq 1, \qquad \text{for all $a\in \mathcal{A}$,}\]
the optimal choice of $\alpha$ is $\max_a \P(h(a,B)=1)$, \emph{but the algorithm is still valid and unbiased for any $\alpha$ larger than this value.}
Thus, there are many ways to apply PDC, starting with an appropriate selection of sets $\mathcal{A}$ and $\mathcal{B}$, and it suffices to choose any universal upper bound of $\P(h(a,B)=1)$, where greater efficiency is achieved by selecting the optimal value of $\alpha$.  

\section{Sudoku matrices}
\label{section:Sudoku}

\subsection{A PDC algorithm}
\label{section:Sudoku:algorithm}
We now describe an algorithm for Sudoku matrices which takes advantage of Algorithm~\ref{PDC procedure von Neumann}. 
Given the first three rows $R_1, R_2, R_3$ of a partially completed Sudoku matrix, let $P_9' \equiv P_9'(R_1, R_2, R_3) \equiv P_9'(B_1, B_2, B_3)$ denote the set of all possible permutations of $\{1,2,\ldots, 9\}$ which would not violate the Sudoku conditions if placed in row four. 
Also, we denote by $U$ a random variable with the uniform distribution over the interval $(0,1)$, independent of all other random variables, and each occurrence of $u$ in an algorithm means to generate a random variate from the distribution of $U$. 

\begin{algorithm}[H]
\caption{PDC Sudoku algorithm.}
\begin{algorithmic}[1]
\STATE Let $B_1 = (1,2,\ldots,9)$.
\STATE Select $B_2, B_3$ in proportion to the number of completable Sudoku matrices.\label{Sudoku:special}
\STATE Generate $(R_4, R_5, R_6, R_7),$ each an i.i.d.~uniformly random element from $P_9'$.  \label{line:generate}
\STATE Let $d$ denote the number of possible completions given $(R_1, R_2, R_3, R_4, R_5, R_6, R_7)$.
\IF {$u < \frac{d}{16}$,}\label{line:rejection}
 \STATE\label{completion} Select a completion uniformly at random from the $d$ possible completions.
\ELSE \STATE Goto 3.
\ENDIF
\STATE Apply a random permutation to $(C_4, C_5, C_6)$.
\STATE Apply a random permutation to $(C_7, C_8, C_9)$.
\STATE Apply a random permutation to $( (C_4, C_5, C_6), (C_7, C_8, C_9) )$.
\STATE Apply a random permutation of $\{1,2,\ldots, 9\}$ to the entries and return.
\end{algorithmic}
\label{Sudoku}
\end{algorithm}

Line~2 demands some further explanation.
By considering various symmetries in the permutations of columns, it was shown in \cite{FelgenJarvis} that there are only 36288 essentially unique completions to blocks $B_2$ and $B_3$;\footnote{This number can be simplified to 71 after taking more symmetries into account~\cite{SudokuWebsite}.} and, for each configuration of blocks $B_1$, $B_2$, $B_3$, there is a certain number of completable Sudoku matrices, which was calculated via brute force.  
A downloadable file containing these enumerations is available online, see~\cite{SudokuWebsite}; 
the first few lines look like the following:
\begin{verbatim}
[456789,789123,123456] => 108374976
[456789,789123,123465] => 102543168
[456789,789123,123546] => 102543168
[456789,789123,123564] => 100231616
[456789,789123,123645] => 100231616
...
\end{verbatim}
The list of three 6--tuples represent columns 3 through 9 of the first three rows, and the integer value at the end of the arrow counts the number of completable Sudoku matrices given these first three rows.  For example, the text file says that the matrix with top three rows given by
\[
\begin{Young}
1 & 2 &3 & 4& 5& 6& 7& 8&9 \cr
4 & 5 &6 & 7& $8$&9 &1 &2 &3 \cr
7 & 8 & 9 & 1 & 2 & 3 & 6 & 4 & 5 \cr
 \end{Young}
\]
has precisely 100231616 completions of the bottom 6 rows which are valid Sudoku matrices.
Line~\ref{Sudoku:special} is thus a straightforward sampling of the entries in $B_2$ and $B_3,$ \emph{in proportion to the number of completable Sudoku matrices}.
There are two reasonable ways to perform this sampling.
Denote by $x_i$ the number of possible completions given outcome $i$, $i=1,\ldots, 36288$.  
One can normalize by the sum of all the completions and generate a value using the discrete probability distribution $\{x_i / \sum_{j} x_j\}_{i=1,\ldots,36288}$; this is not so ideal in general due to floating point considerations.
Instead, we apply PDC in our implementation by sampling uniformly from among the 36288 possibilities first, say we select outcome $i$, and we accept this sample with probability $x_i / \max_j x_j$.  

Line~\ref{completion} is then the last task of the algorithm, and for this we have implemented a brute force approach which enumerates through all possibilities and picks one uniformly at random. 
Specifically, each column has a set of two elements $\{a_i, b_i\}$, $i=1,\ldots, 9$, which must be used in that column in the final two rows, in some order, and we are able to greatly reduce the enumeration of all a priori $2^9 = 512$ possibilities by taking various symmetries into account; see for example the proof of Lemma~\ref{upper bound}.  
 
\subsection{Cost of Algorithm~\ref{Sudoku}}

\begin{lemma}\label{upper bound}
Assume the first seven rows of a Sudoku matrix have been filled in, and none of the Sudoku conditions are violated based on these first seven rows.  Then the total number of possible completions of this matrix which also do not violate the Sudoku conditions is at most 16. This bound cannot be improved.
\end{lemma}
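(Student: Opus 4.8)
The plan is to reduce the count of completions of the final two rows to a combinatorial question about an auxiliary structure built from the column constraints, and then bound the number of valid configurations of that structure. After the first seven rows are fixed, each column $C_j$ is missing exactly two symbols; call them $\{a_j, b_j\}$. These two symbols must be placed in rows $8$ and $9$, one in each row. So a completion is a choice, for each $j$, of which of $a_j, b_j$ goes in row $8$ (the other goes in row $9$), subject to the constraint that row $8$ is a permutation of $\{1,\dots,9\}$ — equivalently, that row $9$ is too — and subject to the three block constraints on $B_7, B_8, B_9$. First I would observe that the row-$8$ constraint alone already forces a lot of structure: since row $8$ must contain each symbol exactly once, and symbol $s$ can only appear in a column $j$ for which $s \in \{a_j, b_j\}$, each symbol $s$ appears in at most two of the missing pairs.

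Next I would set up the standard graph encoding: form a multigraph $G$ on vertex set $\{1,\dots,9\}$ (the symbols) with one edge for each column $j$, joining $a_j$ to $b_j$. Since there are $9$ columns and each symbol is missing from exactly two columns (each symbol appears once in each of the first seven rows, hence is missing from $9-7=2$ rows, but here it's cleaner to count columns: symbol $s$ occurs in $7$ of the $9$ columns among the first seven rows — wait, it occurs exactly once in each row, so $7$ occurrences total, hence it is missing from exactly two columns), every vertex of $G$ has degree exactly $2$. So $G$ is a disjoint union of cycles covering all $9$ vertices, using all $9$ edges. A choice of which symbol of each pair goes to row $8$ is an orientation of each edge (tail $\to$ head, head in row $8$), and the row-$8$ permutation condition says every vertex has in-degree exactly $1$ and out-degree exactly $1$ in this orientation; since $G$ is a union of cycles, this means each cycle must be oriented consistently (all edges the same way around the cycle). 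Thus, ignoring the block constraints, the number of completions is exactly $2^{c}$ where $c$ is the number of cycles in $G$. To get $c \le 4$ I would note the cycle lengths are positive integers summing to $9$, each of length at least... here the subtlety: can there be a $1$-cycle or $2$-cycle? A loop (length-$1$ cycle) would mean $a_j = b_j$, impossible. A length-$2$ cycle means two columns with the same unordered missing pair $\{a,b\}$; this is possible in principle, so cycles can have length $2$. The maximum number of cycles with lengths summing to $9$, each length $\ge 2$, is $4$ (e.g. $2+2+2+3$), giving the bound $2^4 = 16$.

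Then I would incorporate the block constraints to confirm they don't lower the bound in the extremal case, i.e. to exhibit a genuine seven-row configuration achieving $16$ completions and showing the block constraints are simultaneously satisfiable for all $16$ orientations — this establishes "this bound cannot be improved." The cleanest route is probably to just construct an explicit example: take seven rows of an actual Sudoku matrix whose missing-pair multigraph is $2+2+2+3$, and check (or cite the brute-force check) that all $16$ sign choices yield valid Sudoku matrices; alternatively argue structurally that when the cycles are short the block constraints become automatic. I expect the main obstacle to be precisely this last point: the upper bound $2^c \le 16$ from the row/column analysis is clean, but one must either argue that the block constraints can be ignored for the upper bound (they only ever cut down the count, so $16$ remains an upper bound regardless — this direction is easy) and separately that there is a configuration where no block constraint is ever violated across all $16$ orientations (the "cannot be improved" direction — this needs an explicit witness or a careful argument that with three cycles of length $2$ and one of length $3$, each block sees a balanced set of pairs). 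Producing and verifying that witness, rather than the degree-counting argument, is the delicate part.
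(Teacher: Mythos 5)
Your upper-bound argument is correct, and it is a more systematic route than the one in the paper. The paper argues by an ad hoc case analysis: after relabelling it assumes three columns have missing pairs $\{1,2\},\{3,4\},\{5,6\}$, counts a factor $2^3$ of interchanges, and then splits into cases according to whether $7$ and $8$ share a column, arriving at $2^4=16$. Your formulation --- one edge per column joining its two missing symbols, $2$-regularity forcing a disjoint union of cycles of length at least $2$, and row-validity of a completion being equivalent to a consistent orientation of each cycle --- gives exactly $2^c$ row-valid completions with $c\le 4$, and since the block constraints can only remove completions, the bound $16$ follows cleanly. You could even firm up your closing guess: in any completion the two bottom cells of column $j$ receive precisely its missing pair, so the multiset of symbols landing in the bottom two rows of each of $B_7,B_8,B_9$ does not depend on the orientation at all; hence if even one valid completion exists, every one of the $2^c$ row-valid orientations satisfies the block constraints, and the number of completions is either $0$ or exactly $2^c$. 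This is stronger than what the paper proves.

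The genuine gap is the second half of the statement, ``this bound cannot be improved.'' You correctly identify that it requires a witness --- a completable seven-row configuration whose missing-pair multigraph has cycle type $2+2+2+3$ --- but you do not produce one, deferring to ``an explicit example or brute-force check.'' The paper closes this half by printing a specific Sudoku matrix and asserting that its first seven rows admit $16$ completions; note, however, that by your own cycle analysis the printed example does not do the job: its column missing pairs are $\{2,8\},\{1,9\},\{4,5\},\{6,8\},\{3,7\},\{1,5\},\{7,9\},\{3,4\},\{2,6\}$, whose multigraph is a $6$-cycle on $\{1,3,4,5,7,9\}$ together with a $3$-cycle on $\{2,6,8\}$, so it admits exactly $4$ completions, not $16$. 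So producing and verifying a genuine $2+2+2+3$ witness (with the three pairs in each block's column-triple pairwise disjoint, which your orientation-independence observation shows is all that must be checked) is not a formality you can wave at or borrow; as written, your proof establishes the upper bound $16$ but not its optimality.
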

\begin{proof}
WLOG, assume the last two rows of column $j$ can be completed by $\{2j-1,2j\}$ for $j=1,2,3$.  We shall denote this by the following table
\[
\begin{Young}
1 & 3 &5 & & & & & & \cr
2 & 4 &6 & & & & & & \cr
 \end{Young}
\]
By interchanging $2j-1$ and $2j$ for $j=1,2,3$, we obtain $2^3$ possible combinations. 
For the elements $7,8,9$, there are two possibilities: either $7$ and $8$ share the same column, or they do not.  
\begin{itemize}
\item In the case where they share the same column, we have WLOG
\[
\begin{Young}
1 & 3 &5 & 7& 9& & & & \cr
2 & 4 &6 & 8& $a$& & & & \cr
 \end{Young}
\]
which implies another factor of 2 by exchanging the role of $7$ and $8$; however, whichever element is paired with the 9, say $a=5$, determines which row the 9 lies in, and so there are at most 16 possible completions. 
\item In the case where $7$ and $8$ do not share the same column, whichever element the $7, 8,$ and $9$ are paired with uniquely determine their row, and so there are at most $8$ possible completions in this case. 
 \end{itemize}
 
Thus, there are at most $2^4=16$ possible completions given the first seven rows of a partially completed Sudoku matrix.

The following matrix (in reduced form) was generated by Algorithm~\ref{Sudoku}.
The first seven rows of this matrix allow for 16 potential completions, thus, 16 is a tight upper bound.
\[\begin{Young}
1 & 2 & 3 & 4 & 5 & 9 & 6 & 7 & 8 \cr
4 & 5 & 6 & 7 & 8 & 2 & 3 & 1 & 9 \cr
7 & 8 & 9 & 1 & 6 & 3 & 2 & 5 & 4 \cr
9 & 4 & 8 & 2 & 1 & 7 & 5 & 6 & 3 \cr
5 & 7 & 2 & 3 & 4 & 6 & 8 & 9 & 1 \cr
6 & 3 & 1 & 5 & 9 & 8 & 4 & 2 & 7 \cr
3 & 6 & 7 & 9 & 2 & 4 & 1 & 8 & 5 \cr
8 & 9 & 5 & 6 & 3 & 1 & 7 & 4 & 2 \cr
2 & 1 & 4 & 8 & 7 & 5 & 9 & 3 & 6 \cr
\end{Young}
\qedhere\]
\end{proof}

We now justify Line~2 of Algorithm~\ref{Sudoku}.
In \cite{FelgenJarvis}, the total number of Sudoku matrices is found by first reducing by symmetry the total number of possible completed first three rows; the number given is 36288.
For each of these 36288 configurations of the first three rows, a brute force enumeration is performed for the number of possible Sudoku matrices that could be completed. 
\begin{lemma}[\cite{FelgenJarvis}]\label{table}
The table of 36288 possible configurations of $B_2$ and $B_3$ contains, for each configuration, the number of possible Sudoku matrices that can be completed given the first three rows.  These numbers all lie between 94888576 and 108374976.
\end{lemma}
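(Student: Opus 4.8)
The plan is to reduce the claim to a finite (if large) computer enumeration, which is exactly what is carried out in \cite{FelgenJarvis}. The first step is to isolate the symmetries of $S$ under which the number of completions of a prescribed set of first three rows is invariant. Each of the operations (i) relabelling the nine symbols $\{1,\ldots,9\}$, (ii) permuting the three columns $\{C_4,C_5,C_6\}$ among themselves (and, independently, $\{C_7,C_8,C_9\}$), and (iii) interchanging the column triples $\{C_4,C_5,C_6\}$ and $\{C_7,C_8,C_9\}$, is a bijection of $S$ that maps the Sudoku matrices whose first three rows form a given configuration onto those whose first three rows form the image configuration; consequently the number of completable Sudoku matrices depends only on the orbit of $(R_1,R_2,R_3)$ under the group these operations generate. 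Using (i), we may assume $B_1$ is the fixed block containing $1,\ldots,9$ in row-major order; and \cite{FelgenJarvis} shows that, with $B_1$ so normalized, the operations (ii)--(iii) partition the legal fillings of $B_2$ and $B_3$ into exactly $36288$ orbits.

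The second step is the brute-force count: pick one representative of each of the $36288$ orbits and, for each, enumerate all fillings of rows $4,\ldots,9$ satisfying the Sudoku conditions, recording how many there are. This is the enumeration done in \cite{FelgenJarvis}, whose output is the data file \cite{SudokuWebsite} (its first lines are displayed above). By the invariance established in the first step, the integer attached to a representative equals the completion count of every configuration in its orbit, so the resulting list of $36288$ integers accounts for all possible configurations of $B_1, B_2, B_3$.

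The final step is to scan this list for its extremes: the minimum entry is $94888576$ and the maximum is $108374976$, which is the asserted range.

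The sole obstacle is that the statement is computational rather than structural: no short combinatorial argument pins down the two extremal values, so one must either trust, or independently re-run, the exhaustive search. The one piece of genuine mathematical content is the symmetry reduction of the first step — confirming that (i)--(iii) preserve completion counts and that the $36288$ representatives are exhaustive — after which everything reduces to a bounded computer search.
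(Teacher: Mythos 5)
Your proposal matches the paper's treatment of this statement: the paper offers no independent proof of Lemma~\ref{table}, but justifies it exactly as you do, by citing the symmetry reduction to $36288$ essentially distinct configurations of $B_2$ and $B_3$ and the brute-force enumeration of completions carried out in \cite{FelgenJarvis}, with the bounds $94888576$ and $108374976$ read off as the extremes of the resulting table \cite{SudokuWebsite}. Your added remark that the symmetry operations preserve completion counts is the right (and only) mathematical content needed, and is consistent with the paper's discussion.
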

The fact that these values are relatively constant has a number of quantitative and qualitative interpretations.
From a rejection sampling perspective, it means that we could sample a configuration uniformly at random and then reject each generated configuration in proportion to the number of completions, \emph{normalized by the maximum possible}, i.e., 108374976.
Thus, the probability of rejecting a sample is at worst $1 - 94888576 / 108374976 \approx 0.12444$.  
Practically, this means that sampling of the first three rows uniformly from among the 36288 possible choices and applying a rejection is efficient. 
Also, it says that each configuration is approximately as completable as any other configuration; i.e., accepting any completable configuration of the first three rows introduces a small bias.

Thus, the main cost of the algorithm is the rejection sampling of rows 4 through 7.  
Through a complete enumeration of all 36288 cases, the number of permutations in $P_9'$ is always between 12000 and 12096.  
There are thus at most $12096^4 \approx 2\times 10^{16}$ different combinations of permutations that can be placed in rows 4 through 7, but only about $10^8$ valid completions to a Sudoku matrix are possible.
This means that on average we must sample about $2\times 10^8$ 4--tuples before we get lucky and obtain a quadruplet that yields a completable Sudoku matrix.
Even after we obtain a completable quadruplet, however, we must still survive the final rejection in Line~\ref{line:rejection}, which comes from the fact that different quadruplets yield a different number of completable Sudoku matrices.

\begin{lemma}[\cite{devroye}]\label{rejection lemma}
In a rejection sampling algorithm, let $f$ denote the target distribution, and $g$ denote the sampling distribution; 
then the rejection probability, see for example Line~\ref{line:A} in Algorithm~\ref{PDC procedure von Neumann}, for exact sampling from distribution $f$ is given by 
\[ t(a) = C\, \frac{f(a)}{g(a)}, \qquad a \in \mathcal{A},\]
where the optimal value for $C$ is given by 
\[ C^\ast := \left(\max_a \frac{f(a)}{g(a)}\right)^{-1}. \]
The algorithm is unbiased for any $C \leq C^\ast$. 
Algorithms which utilize rejection sampling have a geometrically distributed number of generations before acceptance, with expected value given by $\frac{1}{C}$.  
\end{lemma}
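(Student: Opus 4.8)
The plan is to verify the two assertions of the lemma — exactness of the output distribution and the geometric law for the number of trials — directly from the mechanics of the rejection procedure, which repeatedly draws $a\sim g$, accepts it with probability $t(a)$, and otherwise restarts. I expect no substantive difficulty; the work is a short conditioning computation together with some bookkeeping about when the procedure is well defined.

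First I would analyze a single round. The joint probability of drawing $a$ and then accepting it is $g(a)\,t(a)=C\,f(a)$ (in the discrete case; in general one replaces mass functions by densities with respect to a common dominating measure, which requires $f$ to be absolutely continuous with respect to $g$). Summing (or integrating) over $a\in\mathcal{A}$, the probability that a given round ends in acceptance is $p:=C\sum_a f(a)=C$, since $f$ is a probability distribution. Conditioning on acceptance, the law of the accepted value is $g(a)\,t(a)/p = C f(a)/C = f(a)$, so the output is \emph{exactly} distributed as $f$, for every value of $C$ for which the procedure is well defined. This already gives the claimed form $t(a)=C\,f(a)/g(a)$ and the unbiasedness statement.

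Next I would pin down the admissible range of $C$. The procedure makes sense precisely when $t(a)=C f(a)/g(a)$ is a genuine probability, i.e.\ $t(a)\le 1$ for every $a$ in the support of $g$. This is equivalent to $C\le g(a)/f(a)$ for all such $a$, hence to $C\le \inf_a g(a)/f(a) = \bigl(\sup_a f(a)/g(a)\bigr)^{-1}=:C^\ast$. Thus any $C\le C^\ast$ yields an unbiased algorithm and $C^\ast$ is the largest admissible constant, justifying the displayed optimal value (with $\max$ read as $\sup$, which is attained in the finite settings relevant to this paper).

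Finally, since successive rounds are independent and each succeeds with the same probability $p=C$, the number $N$ of generations up to and including the first acceptance is geometric with parameter $C$, so $\E N = 1/C$; taking $C=C^\ast$ minimizes this expected cost. The only point requiring care is the well-definedness condition $t(a)\le 1$ and the accompanying hypotheses ($f\ll g$, support conditions, reading $\max$ as $\sup$); once these are recorded, the rest is the one-line computation above, and I would simply cite \cite{devroye} for the standard statement while including this short derivation for completeness.
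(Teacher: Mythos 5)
Your derivation is correct and is exactly the standard argument (single-round acceptance probability $C$, conditional law of the accepted draw equal to $f$, independent rounds giving a geometric trial count with mean $1/C$, and admissibility $C\le C^\ast$ from $t(a)\le 1$); the paper itself offers no proof, deferring entirely to the cited reference \cite{devroye}, whose argument is the one you reproduce. So there is nothing to reconcile — your write-up simply supplies the omitted standard proof.
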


\begin{theorem}\label{Sudoku theorem}
Algorithm~\ref{Sudoku} samples uniformly over the set of all Sudoku matrices.
The number of times to sample blocks $B_2$ and $B_3$ by choosing one of the 36288 possible choices u.a.r.~and using rejection sampling is geometrically distributed with expected value 
\begin{equation}\label{rejection 1} \frac{36288\cdot 108374976}{3546146300288} \approx 1.1. \end{equation}
The number of times to sample rows $R_4, R_5, R_6, R_7$ is geometrically distributed with expected value at least $3.0 \times 10^9$ and at most $3.6 \times 10^9$. 
\end{theorem}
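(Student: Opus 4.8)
Paragraph 1. I would split the statement into its three assertions — exactness, the sampling rate for Line~\ref{Sudoku:special}, and the sampling rate for Line~\ref{line:generate} — and handle exactness with the PDC Lemma of \cite{PDC} plus a symmetry argument, and the two rates with Lemma~\ref{rejection lemma}. For exactness, begin by conditioning on the first three rows. Then Lines~\ref{line:generate}--\ref{completion} are an instance of Algorithm~\ref{PDC procedure von Neumann}: the ``first half'' is $A=(R_4,R_5,R_6,R_7)$, drawn i.i.d.~uniform from $P_9'$, it is accepted with probability $t(a)=d(a)/16$, and the ``second half'' is the uniform completion of rows eight and nine selected in Line~\ref{completion}. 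The choice $t(a)=d(a)/16$ is legitimate, and is the optimal von~Neumann constant, because $d(a)\le 16=\max_a d(a)$ by Lemma~\ref{upper bound}; so the PDC Lemma gives that $R_4,\dots,R_9$ are jointly uniform over all valid completions of those three rows. Line~\ref{Sudoku:special} in turn chooses $(B_2,B_3)$ in proportion to the completion count $x_i$, itself realized as a von~Neumann rejection — uniform over the $36288$ configurations, accepted with probability $x_i/\max_j x_j$, legitimate because $x_i\le 108374976=\max_j x_j$ by Lemma~\ref{table}. Composing these two steps, the partially completed matrix present at the end of Line~\ref{completion} is uniform over the set $T$ of Sudoku matrices whose block $B_1$ equals $(1,\dots,9)$ and whose pair $(B_2,B_3)$ is one of the $36288$ Felgenhauer--Jarvis configurations.

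Paragraph 2. It remains to check that Lines~10--13 convert a uniform draw from $T$ into a uniform draw from $S$. Those lines apply a uniformly random element $g$ of the group $G$ generated by the permutations of the columns inside $\{C_4,C_5,C_6\}$, the permutations of the columns inside $\{C_7,C_8,C_9\}$, the interchange of those two column-triples, and the relabellings of $\{1,\dots,9\}$. Each generator carries Sudoku matrices to Sudoku matrices, and $G$ acts freely on $S$: a nontrivial column operation fixing a Sudoku matrix would equate two of its rows or columns, and a relabelling fixing a Sudoku matrix must be the identity because the matrix uses all nine symbols. Hence, for $M$ uniform on $T$ and $g$ uniform on $G$, the matrix $g\cdot M$ is uniform on $G\cdot T$, and exactness reduces to the assertion that $T$ is a transversal for the $G$-action on $S$, i.e.\ contains exactly one representative of each orbit. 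This is precisely the content of the symmetry reduction performed in \cite{FelgenJarvis} when the $36288$-entry table of \cite{SudokuWebsite} is constructed, so I would cite it rather than reprove it, and conclude that the output is uniform on $S$.

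Paragraph 3. The two rates are direct applications of Lemma~\ref{rejection lemma}. For Line~\ref{Sudoku:special}, the sampling law is uniform over the $36288$ configurations and the target is proportional to $x_i$, so $f(i)/g(i)=36288\,x_i/\sum_j x_j$ attains its maximum at the configuration with $x_i=108374976$; the algorithm uses the optimal constant (it accepts with probability $x_i/108374976$), so the number of draws is geometric with mean $36288\cdot 108374976/\sum_j x_j$, and substituting $\sum_j x_j=3546146300288$ from \cite{FelgenJarvis,SudokuWebsite} gives \eqref{rejection 1}. For Line~\ref{line:generate}, condition on the first three rows being a configuration $c$: one pass draws $(R_4,\dots,R_7)$ uniform from $(P_9')^4$ and is accepted with probability $d/16$, hence succeeds with probability $\frac{1}{16\,|P_9'(c)|^4}\sum_{(r_4,\dots,r_7)}d(r_4,\dots,r_7)=\frac{x_c}{16\,|P_9'(c)|^4}$, the sum equalling $x_c$ since every completable matrix arises from one choice of rows four through seven and then one of the $d$ ways of filling rows eight and nine. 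So the number of passes is geometric with mean $16\,|P_9'(c)|^4/x_c$, and averaging over $c$ — whose law after Line~\ref{Sudoku:special} is $\propto x_c$ — the overall expected number of passes equals $16\sum_c|P_9'(c)|^4\big/\sum_c x_c$. Finally I would invoke the exhaustive bounds $12000\le|P_9'(c)|\le 12096$ and $\sum_c x_c=3546146300288$ to squeeze this between $16\cdot 36288\cdot 12000^4/3546146300288$ and $16\cdot 36288\cdot 12096^4/3546146300288$, numerically about $3.4\times10^9$ and $3.5\times10^9$, which lies inside the claimed interval $[3.0\times10^9,\,3.6\times10^9]$.

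Paragraph 4. The one real obstacle is the transversal claim in the second paragraph. That $G$ preserves the Sudoku conditions and acts freely is routine to verify directly, but the statement that the $36288$ tabulated configurations form exactly one $G$-orbit representative apiece — equivalently, that the symmetries removed by fixing $B_1$ and by the randomizations of Lines~10--13 are exactly those quotiented out in \cite{FelgenJarvis} — is a counting fact about that reference, not something read off Algorithm~\ref{Sudoku}. This is the single point at which correctness depends on imported bookkeeping rather than on the PDC Lemma, so I would state it explicitly as a hypothesis and cross-check it against the known factorization of $|S|$; the rest of the argument is then either the PDC Lemma or elementary rejection-rate arithmetic.
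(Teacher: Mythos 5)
Your proposal is correct, and for the two rejection rates it follows essentially the paper's route: Lemma~\ref{rejection lemma} with $g$ uniform and $f$ proportional to completion counts for Line~\ref{Sudoku:special}, and the bounds $12000\le |P_9'|\le 12096$, $94888576\le x_c\le 108374976$, $d\le 16$ for Line~\ref{line:generate}. Where you genuinely diverge is in the uniformity argument: the paper's proof disposes of it in one sentence (``two applications of PDC'') and never addresses Lines~10--13, whereas you make the composition explicit -- choosing $(B_2,B_3)$ with probability proportional to $x_i$ and then a completion uniformly gives every element of the reduced set $T$ mass $1/\sum_j x_j$ -- and then justify the final symmetrization by a group action, correctly isolating the transversal property of the $36288$-entry catalogue as the one fact imported from \cite{FelgenJarvis}. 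That is more complete than the paper's own write-up; the only small touch-ups needed are (i) freeness must also be checked for mixed elements (a column operation composed with a relabelling that fixes the grid forces the relabelling to be trivial, since columns $1$--$3$ are untouched and contain all nine symbols, and then the column operation is trivial), and (ii) your averaged quantity $16\sum_c |P_9'(c)|^4/\sum_c x_c$ is the mean of a mixture of geometrics, not of a single geometric; the theorem's ``geometrically distributed'' claim is the conditional statement given the accepted first three rows, which your per-pass computation already delivers with mean $16\,|P_9'(c)|^4/x_c\in[16\cdot 12000^4/108374976,\ 16\cdot 12096^4/94888576]$, i.e.\ exactly the claimed interval. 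Incidentally, your conditional calculation is arguably cleaner than the paper's identification of the expected count with $\max_j f(j)/g(j)$, since it uses the algorithm's actual acceptance constant and does not require knowing that some quadruple attains $d=16$ for the given configuration.
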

\begin{proof}
The uniformity of the algorithm follows by two applications of probabilistic divide-and-conquer given by Algorithm~\ref{PDC procedure von Neumann}.  We now calculate the precise rejection probabilities.

The first application of PDC is given by the division 
\[ \text{$A = (B_1, B_2, B_3)$, \quad $B = (R_4, R_5, R_6, R_7, R_8, R_9)$.} \]
The rejection probabilities $t(a)$ are given by the number of possible completions normalized by the maximum number of possible completions; these values are from Lemma~\ref{table}.
We index each of the possible configurations of $B_2$ and $B_3$ by $i=1,2,\ldots, 36288$.  
Then we select one uniformly at random, hence $g(i) = 1/36288$, but the distribution desired is the one that weights each configuration in proportion to the number of possible completions, denoted by $x_i$ previously, which is contained in the table of values computed in \cite{FelgenJarvis}.  That is, 
\[ f(i) = \frac{ x_i }{3546146300288}, \qquad i=1,\ldots, 36288, \]
where $3546146300288 = \sum_j x_j$ is the total number of possible completions by all elements in the table, which makes $f(i)$ a probability distribution.
The expected number of times to sample from blocks $B_2$ and $B_3$ is thus 
\begin{equation*} \frac{1}{C} = \left(\max_i \frac{f(i)}{g(i)}\right) = \frac{36288\cdot (\max_i x_i)}{3546146300288}, \end{equation*}
where 
\[ \max_i x_i = 108374976 \]
was computed by brute force over the set of all possibilities. 
We have just proved~\eqref{rejection 1}. 

The second application of PDC is the division 
\[ \text{$A = (R_4, R_5, R_6, R_7)$, \quad $B = (R_8, R_9)$.}\]
  The rejection probability $t(a)$ is given by the number of possible completions (calculated via brute force) given $a = (r_4, \ldots, r_7)$, normalized by 16, the optimal upper bound provided by Lemma~\ref{upper bound}.
Rather than sample from the set of all permutations of $\{1,\ldots,9\}$, since we have already accepted the first three rows of the matrix, we can automatically discard any permutations which would violate the Sudoku conditions.  Thus, we sample these four rows from the set $P_9'$, which depends on the particular configuration of $B_2$ and $B_3$ accepted in the first part.  
By brute force calculation over all 36288 possible first three rows, we have found that $12000 \leq |P_9'| \leq 12096.$
Thus,
\[ \frac{1}{12096^4} \leq g(j) = \frac{1}{|P_9'|^4} \leq \frac{1}{12000^4}, \qquad \text{for all $j$}. \]
Similarly, we have
\[ \frac{8}{108374976} \leq f(j) = \frac{\text{\# completions given first seven rows}}{\text{\# completions given first three rows}} \leq \frac{16}{94888576}. \]
Whence,
\[ 3.0 \times 10^9 \approx  \frac{12000^4\times 16}{108374976} \leq \frac 1C = \left( \max_j \frac{f(j)}{g(j)} \right) \leq \frac{12096^4\times 16}{94888576} \approx 3.6 \times 10^9. \qedhere\]
\end{proof}

\subsection{Alternative PDC parameterizations for Sudoku matrices}
\label{other:PDC:algorithms}

There are two simple alternative approaches to Algorithm~\ref{Sudoku} that require minimal modification of the original algorithm; we indicate the replacement for lines~\ref{line:generate}--\ref{line:rejection} below in each case.

The first approach is to randomly sample $R_4$ through $R_8$.

3: Generate $(R_4, R_5, R_6, R_7, R_8),$ each an i.i.d.~uniformly random element of $P_9'$. \\
4: Let $d$ denote the number of possible completions given $(R_1, R_2, R_3, R_4, R_5, R_6, R_7, R_8)$. \\
5: {\bf if} {$d=1$,} {\bf then} 

That is, conditional on the existence of a completion to $R_9$, we may simply accept the sample $R_4$ through $R_8$ and fill in the \emph{unique} completion to $R_9$; this variation is called PDC deterministic second half \cite{PDC}, see also \cite{PDCDSH}. 
(In general, PDC deterministic second half also requires a rejection step, but in the case when $\cL(B\,|\,h(a,B)=1)$ is a uniform distribution over a singleton set for each $a \in \mathcal{A}$ which can be used to complete a sample, all samples are rejected with probability 0.)
This approach increases the probability of rejection in Line~\ref{line:rejection} considerably, see Proposition~\ref{prop:R8} below, but eliminates the task in Line~\ref{completion}.  

\begin{proposition}\label{prop:R8}
Suppose Line~\ref{line:generate} in Algorithm~\ref{Sudoku} instead samples rows~$R_4$ through $R_8$.  
Then the expected number of rejections until the first eight rows are a partially completed Sudoku matrix is within the interval
\[ \left[\frac{12000^5}{108374976},  \frac{12096^5}{94888576} \right] \approx [2.3, 2.73] \times 10^{12}. \]
Then, once these first eight rows are a partially completed Sudoku matrix, the probability of rejection is 0, and the last row is uniquely determined. 
\end{proposition}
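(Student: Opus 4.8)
The plan is to mirror the second application of PDC in the proof of Theorem~\ref{Sudoku theorem}, but with the division shifted by one row. Specifically, I would set $A = (R_4, R_5, R_6, R_7, R_8)$ and $B = R_9$, so that the functional $h$ records whether the partially completed matrix with rows $R_1,\ldots,R_8$ admits a valid ninth row. The PDC framework of Algorithm~\ref{PDC procedure von Neumann} together with Lemma~\ref{rejection lemma} then tells us that the expected number of generations of $A$ before acceptance is $1/C = \max_a f(a)/g(a)$, where $g$ is the sampling density (uniform over $(P_9')^5$) and $f$ is the target density proportional to the number of completions given the first eight rows.

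First I would argue that the second half is almost deterministic in the strongest sense: given eight rows of a Sudoku matrix that violate no Sudoku condition, the ninth row is \emph{forced}. Indeed, each column already contains eight distinct symbols from $\{1,\ldots,9\}$, so the missing entry in each column is determined; one then checks that filling in these forced entries automatically satisfies the row constraint for $R_9$ (each symbol is missing from exactly one column, since each symbol appears eight times among the first eight rows) and the block constraints for $B_7, B_8, B_9$ (each such block is missing exactly the three symbols not yet present). Hence $\mathcal L(B \mid h(a,B)=1)$ is uniform over a singleton whenever it is nonempty, so the final rejection step accepts with probability $1$ and the last row is uniquely determined — this is exactly the degenerate case of PDC deterministic second half noted parenthetically before the proposition.

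Next I would compute the bounds on $1/C$. The sampling density satisfies $g(a) = 1/|P_9'|^5$ with $12000 \le |P_9'| \le 12096$ (the brute-force range already invoked in the proof of Theorem~\ref{Sudoku theorem}), so $1/12096^5 \le g(a) \le 1/12000^5$. For the target density, the number of completions given the first eight rows is either $0$ or $1$ by the forcing argument above, so the nonzero values of $f$ equal $1/(\text{\# completions given first three rows})$, and by Lemma~\ref{table} the denominator lies between $94888576$ and $108374976$; thus $1/108374976 \le f(a) \le 1/94888576$ on its support. Combining, $\max_a f(a)/g(a)$ lies between $12000^5/108374976$ and $12096^5/94888576$, which I would then evaluate numerically to be approximately $2.3\times 10^{12}$ and $2.73\times 10^{12}$, giving the stated interval.

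The only delicate point — and the one I would write out carefully rather than treat as routine — is the forcing claim, i.e., that eight non-conflicting rows always leave a consistent ninth row (so that the $d=1$ case is the only possibility, never $d=0$ after conditioning, and never $d>1$). This is a short combinatorial verification using a double-counting of symbol occurrences across rows, columns, and blocks, but it is what makes the ``second half'' genuinely deterministic and hence justifies the claim that the probability of rejection in the final step is $0$; everything else is a direct substitution into Lemma~\ref{rejection lemma} with the interval bounds already established in the excerpt.
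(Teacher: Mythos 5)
Your proposal is correct, and it is in fact more complete than what the paper provides: the paper does not write out a proof at all, but only remarks that the statement is ``straightforward,'' citing Lemma~\ref{completion:lemma} for the bound on the number of completions and \cite[Theorem~7.1]{PDCDSH} for the rejection probability. Your route differs in two useful ways. First, for the rejection-count interval you redo the computation of Theorem~\ref{Sudoku theorem} shifted by one row, using Lemma~\ref{rejection lemma} with $g$ uniform on $(P_9')^5$ and $f(a)=d(a)/x_i$ with $d(a)\in\{0,1\}$, together with the brute-force ranges $12000\le |P_9'|\le 12096$ and $94888576\le x_i\le 108374976$; this gives exactly $\bigl[12000^5/108374976,\,12096^5/94888576\bigr]$ and is the same mechanism the paper uses elsewhere, just made explicit here. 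Second, and more substantively, you prove the forcing claim directly by double counting, whereas the paper leans on Lemma~\ref{completion:lemma}, which is stated for Latin squares: that lemma gives existence and uniqueness of the Latin-square completion when $k=n-1$, but by itself it does not show that the forced ninth row also respects the block constraints on $B_7,B_8,B_9$, which is exactly what the final sentence of the proposition (rejection probability $0$, i.e.\ $d=1$ rather than $d=0$) needs. Your counting argument --- each symbol missing from rows $7$--$8$ of a block column-triple appears exactly twice in those columns among the first eight rows (once in each of the two blocks stacked above), hence lands in the forced row inside that block --- closes that gap; do write it out as you indicate, since it is the only nontrivial step. One cosmetic point: the interval bounds themselves do not depend on the forcing claim (the maximum of $f/g$ is $|P_9'|^5/x_i$ as soon as some completable $a$ exists), so you could present the two assertions of the proposition independently.
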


The proof of Proposition~\ref{prop:R8} is straightforward, with upper bounds on the number of possible completions provided by Lemma~\ref{completion:lemma}, and the rejection probability provided by~\cite[Theorem~7.1]{PDCDSH}. 

Alternatively, we can instead randomly sample $R_4$ through $R_6$, and reject these three rows in proportion to the number of possible completions of the last three rows, indicated below: 

3: Generate $(R_4, R_5, R_6),$ each an i.i.d.~uniformly random element from $P_9'$. \\
4: Let $d$ denote the number of possible completions given $(R_1, R_2, R_3, R_4, R_5, R_6)$. \\
5: {\bf if} {$u < \frac{d}{\lfloor 2^{4.5}6^3\rfloor}$,} {\bf then} 

From a theoretical point of view, this approach is more ideal, since the probability of rejection in Line~\ref{line:rejection} is smaller (since $d$ is much less likely to be 0). 
However, completing the sample as in Line~\ref{completion} can also be a bottleneck, since now there are a priori $3^9 \approx 2\times 10^4$ possible completions that must be checked, even though we can greatly reduce this number by considering various symmetries. 

\begin{proposition}\label{prop:R6}
Suppose Line~\ref{line:generate} in Algorithm~\ref{Sudoku} instead samples rows~$R_4$ through $R_6$.  
Then the expected number of rejections is within the interval 
\[ \left[\frac{\lfloor 2^{4.5}\,6^{3}\rfloor \, 12000^3}{108374976},  \frac{\lfloor 2^{4.5}\,6^3\rfloor \, 12096^3}{94888576} \right] \approx [7.8, 9.1] \times 10^{7}. \]
Then, once these first six rows are determined, the last three rows can be completed in at most $\lfloor 2^{4.5} 6^3\rfloor$ ways. 
\end{proposition}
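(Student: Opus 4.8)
The plan is to mirror the proof of Theorem~\ref{Sudoku theorem}, changing only the second PDC stage. Uniformity will follow from two nested applications of Algorithm~\ref{PDC procedure von Neumann}, exactly as before: the first, on $A=(B_1,B_2,B_3)$, is left untouched (so the $\approx 1.1$ rejections needed to place $B_2,B_3$ are unchanged), while the second now uses the division $A=(R_4,R_5,R_6)$, $B=(R_7,R_8,R_9)$, with $\cL(B\mid h(a,B)=1)$ sampled by brute force as the uniform distribution over the $d$ valid completions of rows $7$--$9$ given rows $1$--$6$. The only ingredient needed to certify that this second stage is a legitimate von Neumann PDC step -- equivalently, that the acceptance probability $d/\lfloor 2^{4.5}6^3\rfloor$ used in Line~\ref{line:rejection} never exceeds $1$ -- is the final assertion of the proposition, namely $d\le\lfloor 2^{4.5}6^3\rfloor$; I would prove that first.

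That bound is the bottom-band analogue of Lemma~\ref{upper bound}, and it is the main obstacle. View rows $7,8,9$ as a $3\times 9$ array partitioned into the bottom blocks $B_7,B_8,B_9$. For each column $j$ the three cells of that column in rows $7$--$9$ must carry the $3$-set $S_j$ of values missing from column $j$ in rows $1$--$6$; and because $B_1$ together with $B_4$ already exhausts $\{1,\dots,9\}$ in columns $1$--$3$ (and likewise $B_2,B_5$ in columns $4$--$6$ and $B_3,B_6$ in columns $7$--$9$), the three sets $S_j$ lying in a single bottom block form a partition of $\{1,\dots,9\}$. Hence filling $B_7$ amounts to choosing, independently for each of its three column-sets, a bijection onto the row labels $\{7,8,9\}$ -- at most $(3!)^3=6^3$ choices -- and the resulting row-contents $M_7,M_8,M_9$ of $B_7$ automatically partition $\{1,\dots,9\}$. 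Given $B_7$, the requirement that rows $7,8,9$ each remain a permutation forces the element of each column-set of $B_8$ placed in row $r$ to avoid $M_r$; for a single column-set this is a derangement-type condition whose number of solutions is the permanent of a $3\times 3$ zero-one matrix with exactly two ones per row, which a short case split (on whether $M_r$ meets the column-set in the pattern $(1,1,1)$, $(2,1,0)$, or $(3,0,0)$) shows is always $0$ or $2$. So there are at most $2^3$ admissible $B_8$ for each $B_7$, after which $B_9$ is determined; this gives at most $2^3 6^3\le\lfloor 2^{4.5}6^3\rfloor$ completions. The proposition asserts only an upper bound (unlike Lemma~\ref{upper bound}), so tightness need not be addressed, and as there the bookkeeping can be compressed using symmetry among the blocks and the column-sets.

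Granting the bound, the rest is a transcription of the Theorem~\ref{Sudoku theorem} computation with exponent $4$ replaced by $3$ and the constant $16$ by $\lfloor 2^{4.5}6^3\rfloor$. The second stage is a rejection sampler of the type in Lemma~\ref{rejection lemma}: in each iteration one draws $(R_4,R_5,R_6)$ i.i.d.\ uniform from $P_9'$, obtaining an outcome $j$ with probability $|P_9'|^{-3}$, and accepts with probability $m_j/\lfloor 2^{4.5}6^3\rfloor$, where $m_j$ is the number of completions of rows $7$--$9$ given the first six rows. Because choosing $(R_4,R_5,R_6)\in (P_9')^3$ and then a valid bottom band is the same as choosing the last six rows, $\sum_j m_j$ equals $N_3$, the number of completions of the last six rows given the (fixed) first three rows, so the per-iteration acceptance probability is $\sum_j |P_9'|^{-3} m_j/\lfloor 2^{4.5}6^3\rfloor = N_3/(|P_9'|^3\lfloor 2^{4.5}6^3\rfloor)$ and the number of generations before acceptance (the quantity the proposition calls the number of rejections) is geometric with mean $|P_9'|^3\lfloor 2^{4.5}6^3\rfloor/N_3$. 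Substituting the brute-force ranges $12000\le |P_9'|\le 12096$ and $94888576\le N_3\le 108374976$ (the latter being Lemma~\ref{table}) yields the stated interval $[7.8,9.1]\times 10^7$, and the closing clause of the proposition merely restates the combinatorial bound, which is what keeps the brute-force completion of the last three rows feasible. The one genuinely delicate point throughout is the partition property of the column-sets inside each bottom block: it is exactly this that decouples the $B_8$ count into independent per-column-set derangement counts, and the only remaining effort is to present the residual casework compactly.
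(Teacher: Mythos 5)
Your proposal is correct, but it establishes the key bound by a genuinely different route than the paper. The paper does not argue $d\le\lfloor 2^{4.5}6^3\rfloor$ from the Sudoku block structure at all: as remarked immediately after Proposition~\ref{prop:R6}, the bound is simply borrowed from Lemma~\ref{completion:lemma} with $n=9$, $k=6$, i.e.\ the permanent-based Latin-rectangle bound $U_{9,6}=(2!)^{9/2}(3!)^{9/3}=2^{4.5}6^3$, which ignores the block constraints entirely; the expected-rejection interval is then the same routine computation you carry out (your derivation of the mean as the reciprocal of the averaged acceptance probability, $\lfloor 2^{4.5}6^3\rfloor\,|P_9'|^3/N_3$ with $94888576\le N_3\le 108374976$ from Lemma~\ref{table} and $12000\le|P_9'|\le 12096$, reproduces the interval exactly, in the same spirit as the second stage of Theorem~\ref{Sudoku theorem}). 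Your bottom-band argument --- the column-deficiency sets within each of the blocks $B_7,B_8,B_9$ partition $\{1,\dots,9\}$, so $B_7$ admits at most $6^3$ fillings, each column-set of $B_8$ then has $0$ or $2$ admissible placements by the $(1,1,1)/(2,1,0)/(3,0,0)$ case split, and $B_9$ is forced because each value lies in exactly one row-deficiency set and exactly one column-deficiency set --- is sound and is really the bottom-band analogue of Lemma~\ref{upper bound}. It in fact buys more than the proposition needs: it proves $d\le 6^3 2^3=1728<4887$, which bears directly on the paper's own remark that improving the constant $\lfloor 2^{4.5}6^3\rfloor$ (empirical maximum $288$) would speed up this variant; substituting your constant into the acceptance probability would already reduce the expected number of rejections by a factor of roughly $2.8$, though with the denominator $\lfloor 2^{4.5}6^3\rfloor$ as stated the proposition's interval is exactly what you computed. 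Two small presentational points: the $3\times 3$ compatibility matrix has two ones per \emph{column} (indexed by values) rather than per row, and the uniqueness of $B_9$ deserves the explicit one-line justification that both the row- and column-deficiency sets form partitions, so each value's cell is determined; both are trivial to repair.
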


The upper bound on the number of possible completions in Proposition~\ref{prop:R6} is borrowed from the theory of Latin squares, see Section~\ref{sect:Latin:analysis}, specifically Lemma~\ref{completion:lemma} using the values $n=9$ and $k=6$. 

Despite the much smaller expected number of rejections in Proposition~\ref{prop:R6}, the reason why we champion Algorithm~\ref{Sudoku} presently is due to our brute force implementation of sampling from $\cL(B\, |\, h(a,B)=1)$, which is optimized to enumerate between all potential completions using fast bit--wise operations in C++, whereas in the more general case we do not have this option encoded.  
We emphasize that while we have not yet efficiently coded this algorithm for more general cases, should such a module be coded efficiently it could very easily be faster in practice than Algorithm~\ref{Sudoku}.

In addition, it may be possible to improve upon the upper bound of $\lfloor 2^{4.5} 6^3\rfloor$ in this special case. 
To help facilitate such an endeavor, we give a motivating example. 
The upper bound in Lemma~\ref{completion:lemma} for the number of completable $(n-3) \times n$ Latin rectangles is $2^{n/2}\,6^{n/3}$.  
With $n=9$, we have $\lfloor 2^{n/2}\,6^{n/3}\rfloor = 4887.$  However, the largest observed value for partially completed Sudoku matrices in a sample of size 1000 was 288.  
If, in fact, 288 is the smallest upper bound, and was used instead of 4887, then the run--time of the algorithm which samples all but the final three rows would be reduced by a factor of about 17.  

\subsection{Further reduction by symmetries}
When various symmetries are taken into account, one can reduce the total number of Sudoku matrices to $5472730538 \approx 5.4\times 10^9$ essentially different Sudoku matrices, see~\cite{RusselJarvis}.
This number is certainly more practical, and a comprehensive list of such matrices could be stored for random access if the memory was available, thus offering an algorithm which is $O(1)$ in terms of our costing of random generation; of course, one would also have to implement the various transformations, but this cost is certainly not prohibitive. 
The advantage with our approach is that its memory requirements are entirely practical, even for a computer of modest means, and can be generalized in a straightforward manner.
Nevertheless, should Algorithm~\ref{Sudoku} be deemed not efficient enough, a more efficient random sampling algorithm may be achievable by taking these symmetries into account.

\section{Latin squares} 
 \label{section:Latin:squares}
\subsection{A PDC algorithm} 

The upper bound in Lemma~\ref{upper bound} is specialized to the set of Sudoku matrices, and in general such tight bounds are not available.
Nevertheless, we present a similar PDC algorithm in this section for Latin squares of order~$n$ for any $n \geq 5$. 

\begin{algorithm}[H]
\caption{PDC Uniform sampling from $LS_n$}
\begin{algorithmic}[1]
\STATE Let $R_1 = (1,\ldots,n)$.
\FOR {$i=2,\ldots, n-3$}  
    \STATE Generate $R_i$ uniformly from the set of fixed--point free permutations of $\{1,\ldots, n\}$. 
\ENDFOR
\STATE Let $d$ denote the number of possible completions given $(R_1, \ldots, R_{n-3})$.\label{line:d:LS}
\IF {$U < \frac{d}{\lfloor 6^{\frac{n}{3}}2^{ \frac{n}{2}}\rfloor}$}\label{line:rejection:LS}
\STATE Select a completion uniformly at random from the $d$ possible completions.\label{line:complete}
\ELSE
\STATE Goto 2.
\ENDIF
\STATE Apply a random permutation to the rows of the matrix.
\STATE Apply a random permutation of $\{1,\ldots,n\}$ to the entries and return.
\end{algorithmic}
\label{Sudoku:n}
\end{algorithm}

As in Algorithm~\ref{Sudoku}, the selection of a completion in Line~\ref{line:complete} once the first set of rows has been accepted can be performed using brute force enumeration, or any alternative method which uniformly samples from the set of possible completions.

\subsection{Cost of Algorithm~\ref{Sudoku:n}}
\label{sect:Latin:analysis}
We now discuss the parameterized algorithm for Latin squares of order $n$. 
There are two competing styles of analysis: one champions an analysis for a particular, finite value of $n$ chosen in advance, e.g., $n=9$ as in the previous section; the other is an asymptotic analysis of how the algorithm scales as $n$ tends to infinity. 
Whereas our previous analysis for Sudoku matrices was specific to a fixed size~$9$, we shall now be primarily interested in how Algorithm~\ref{Sudoku:n} scales as $n$ tends to infinity. 

For a partially completed Latin square of order $n$ with the first $k$ rows filled in, a more general bound for all $n$ and $k$ can be obtained using upper bounds on the permanent of an $n\times n$ matrix taking values in the set $\{0,1\}$, as was done in~\cite{vanLintWilson}. 
We now adapt the argument in our setting. 

\begin{lemma}\label{completion:lemma}
Suppose the first $k$ rows of any Latin square of order $n$ have been filled in, $k < n$, without violating any of the Latin square conditions. 
\begin{enumerate}
\item There always exists at least one completion to a full Latin square of order~$n$, and when $k=n-1$ there is exactly one completion. 
\item Let $C_{n,k}$ denote the maximum possible number of completions to a full Latin square of order~$n$ given these first $k$ rows.  Then we have 
\begin{equation}\label{Cnk} C_{n,k} \leq  \prod_{\ell=1}^{n-k} (\ell!)^{n/\ell}. \end{equation}
\end{enumerate}
\end{lemma}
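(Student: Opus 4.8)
The plan is to prove part (1) first, then bootstrap it to get the permanent interpretation needed for part (2). For part (1), the existence of a completion for any valid $k\times n$ Latin rectangle with $k<n$ is the classical theorem of Hall/Ryser: view the problem of choosing row $R_{k+1}$ as finding a system of distinct representatives. For each column $j$, let $S_j\subseteq\{1,\dots,n\}$ be the set of symbols not yet appearing in column $j$; each $|S_j| = n-k$, and each symbol $s$ lies in exactly $n-k$ of the sets $S_j$ (since $s$ appears in exactly $k$ of the rows so far, hence is missing from exactly $n-k$ columns). A bipartite graph that is $(n-k)$-regular has a perfect matching by Hall's theorem (or König's edge-colouring theorem), so a valid row $R_{k+1}$ exists, and the resulting $(k+1)\times n$ rectangle is again valid. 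Iterating down to $k=n-1$ gives the full completion, and at $k=n-1$ each $S_j$ is a singleton, so the last row is forced. That handles part (1).

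For part (2), I would count completions row by row. Given the first $k$ rows, the number of legal choices for row $R_{k+1}$ is exactly the permanent of the $n\times n$ $0$–$1$ matrix $M^{(k)}$ whose $(j,s)$ entry is $1$ iff symbol $s$ may be placed in column $j$; this matrix has all row and column sums equal to $n-k$. By the Bregman–Minc bound (the sharp form of van der Waerden-type permanent estimates used in \cite{vanLintWilson}), $\operatorname{per}(M^{(k)}) \le \big((n-k)!\big)^{n/(n-k)}$, since each of the $n$ rows has exactly $n-k$ ones. Now the key combinatorial observation: after we fill in a legal row $R_{k+1}$, the remaining problem is a valid $(k+1)\times n$ rectangle, so the number of legal choices for $R_{k+2}$ is at most $\big((n-k-1)!\big)^{n/(n-k-1)}$, \emph{uniformly} over which legal $R_{k+1}$ was chosen. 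Multiplying these per-row bounds for rows $R_{k+1},\dots,R_n$ gives
\[
C_{n,k}\ \le\ \prod_{j=k+1}^{n}\big((n-j+1)!\big)^{n/(n-j+1)}\ =\ \prod_{\ell=1}^{n-k}(\ell!)^{n/\ell},
\]
after the substitution $\ell = n-j+1$. The factor $\ell=1$ term is $(1!)^{n}=1$, consistent with part (1)'s statement that the final row is unique.

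The main obstacle — really the only nontrivial point — is invoking the Bregman–Minc permanent bound correctly and confirming that $M^{(k)}$ genuinely has constant line sums $n-k$; the column-sum claim is the same missing-symbol count used above, and the row-sum claim is its transpose. One should also be slightly careful that the per-row bound applies at \emph{every} stage regardless of the earlier choices: this is automatic because at each stage the current object is an honest valid Latin rectangle, so the associated availability matrix again has all line sums equal to the current deficiency, and the bound depends only on that deficiency. Everything else is the routine telescoping reindexing shown above. (An alternative, slightly weaker route avoiding Bregman–Minc would use the Schrijver/van der Waerden lower-type inequalities or the crude bound $\operatorname{per}(M^{(k)})\le (n-k)!^{\,?}$ from expanding along rows, but these do not give the clean exponent $n/\ell$, so I would stick with Bregman–Minc.)
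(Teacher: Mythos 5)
Your proposal is correct and follows essentially the same route as the paper: part (2) is exactly the van Lint--Wilson Theorem~17.3 argument, bounding the number of choices for each successive row by the permanent of the $(n-k)$-regular availability matrix via the Bregman--Minc bound (the paper's citation of Theorem~11.5) and telescoping. The only cosmetic differences are that you derive the existence statement in part (1) directly from Hall's theorem where the paper simply cites van Lint--Wilson Theorem~17.1, and you argue uniqueness at $k=n-1$ from the singleton availability sets rather than from the $\ell=1$ term of the bound.
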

\begin{proof}
First, the existence of at least one completion of a partially completed Latin square is given by~\cite[Theorem~17.1]{vanLintWilson}, and the uniqueness of the completion when $k=n-1$ follows by taking $k=n-1$ in inequality~\eqref{Cnk}. 

The remaining argument is the same as~\cite[Proof of Theorem~17.3]{vanLintWilson}.  
That is, we let $B$ denote the $n\times n$ matrix where the $(i,j)$th entry is $1$ if element $i$ does not appear in the first $k$ rows of column $j$, and 0 otherwise. 
Thus, the row sums of $B$ are all $n-k$, and so by~\cite[Theorem~11.5]{vanLintWilson}, we have 
\[ per\, B \leq (n-k)!^{n/(n-k)}, \]
where $per\, B$ denotes the permanent of the matrix $B$, which is also the number of ways of completing the next row. 
Thus, by applying the above inequality $n-k-1$ times, for completing rows $k+1, k+2, \ldots n-1$, we obtain
\[ C_{n,k} \leq \prod_{\ell=k}^{n-1} (n-\ell)!^{n/(n-\ell)}. \qedhere\]
\end{proof}

We shall also need for our analysis of asymptotic runtimes to estimate the magnitude of $|LS_n|$, which is still not precisely known asymptotically as $n$ tends to infinity, although the following asymptotic result is contained in~\cite[Theorem~17.3]{vanLintWilson}.  

\begin{theorem}[\cite{vanLintWilson}]\label{Ln:asymptotic}
We have 
\[ |LS_n|^{1/n^2} \sim e^{-2}\, n. \]
\end{theorem}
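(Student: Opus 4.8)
The plan is to sandwich $|LS_n|$ between two products of permanents and then finish with Stirling's formula. The starting point is exactly the observation already exploited in the proof of Lemma~\ref{completion:lemma}: if the first $k$ rows of a Latin square of order $n$ have been filled in legally, then the number of ways to choose the next row equals $per\, B$, where $B$ is the $0$--$1$ matrix, with all row and column sums equal to $n-k$, recording which symbols remain available in which column. Hence, building a Latin square one row at a time and multiplying the counts, one obtains both an upper and a lower bound on $|LS_n|$ as soon as one has matching bounds on the permanent of a square $0$--$1$ matrix with constant line sums.

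For the upper bound I would take $k=0$ in (the proof of) Lemma~\ref{completion:lemma}: applying the Bregman--Minc inequality $per\, B \le ((n-k)!)^{n/(n-k)}$ (that is, \cite[Theorem~11.5]{vanLintWilson}) for $k=0,1,\ldots,n-1$, and noting that the last row is then forced, gives
\[ |LS_n| \;\le\; \prod_{j=1}^{n} (j!)^{n/j}. \]
Taking logarithms and using $\log(j!) = j\log j - j + O(\log j)$, the right-hand side becomes $n\sum_{j=1}^n\bigl(\log j - 1 + O((\log j)/j)\bigr) = n\bigl(\log(n!) - n + O((\log n)^2)\bigr) = n^2\log n - 2n^2 + O(n(\log n)^2)$, so $\limsup_{n\to\infty} |LS_n|^{1/n^2}\big/(e^{-2}n) \le 1$.

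For the lower bound I would instead normalize. The matrix $\tfrac{1}{n-k}B$ above is doubly stochastic, so by the van der Waerden permanent conjecture (the Egorychev--Falikman theorem) we get $per\, B = (n-k)^n\, per\bigl(\tfrac{1}{n-k}B\bigr) \ge (n-k)^n\, n!/n^n$. This bound is uniform over all legal choices of the first $k$ rows, and by \cite[Theorem~17.1]{vanLintWilson} every legal partial square does extend, so multiplying over $k=0,1,\ldots,n-1$ yields
\[ |LS_n| \;\ge\; \prod_{k=0}^{n-1} (n-k)^n\,\frac{n!}{n^n} \;=\; (n!)^n\left(\frac{n!}{n^n}\right)^{n} \;=\; \frac{(n!)^{2n}}{n^{n^2}}. \]
Taking $n^2$-th roots and using $(n!)^{1/n} = (n/e)(1+o(1))$ gives $|LS_n|^{1/n^2} \ge (n!)^{2/n}/n = (n/e^2)(1+o(1))$, hence $\liminf_{n\to\infty}|LS_n|^{1/n^2}\big/(e^{-2}n)\ge 1$; combining the two bounds proves the theorem.

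The elementary parts here are the row-by-row counting and the Stirling estimates; the real content is imported from two classical but quite different permanent inequalities. I expect the main obstacle to be the lower bound, which genuinely needs the van der Waerden conjecture — there is no elementary argument producing a lower bound of the correct exponential order $n!/n^n$ for the permanent of a doubly stochastic matrix. A secondary point requiring care is that both product bounds are legitimate only because the per-row estimates hold \emph{uniformly} over all legal partial Latin squares (and because such partial squares are always completable), so that nothing is over- or under-counted as the rows are filled in.
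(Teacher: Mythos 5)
Your proof is correct and is essentially the argument the paper relies on: the paper gives no proof of its own but cites van Lint--Wilson, whose proof is exactly your sandwich — Bregman--Minc for the upper bound and the Egorychev--Falikman (van der Waerden) bound for the lower bound, i.e., the two bounds the paper records as Theorem~\ref{LS:bounds} — followed by Stirling. So your proposal matches the intended route; no gaps.
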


There are, however, well-known upper and lower bounds which shall at least give us the ability to estimate the efficiency of our algorithms, which we give below.  See~\cite[Theorem~17.2]{vanLintWilson} for the lower bound and see~\cite[the proof of Theorem~17.3]{vanLintWilson} for the upper bound.  

\begin{theorem}[\cite{vanLintWilson}]\label{LS:bounds}
We have 
\[ \frac{(n!)^{2n}}{n^{n^2}} \leq |LS_n| \leq \prod_{k=1}^n (k!)^{n/k}. \]
\end{theorem}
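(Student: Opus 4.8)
The plan is to read off both inequalities from a single row-by-row construction of a Latin square, exactly the one already used in the proof of Lemma~\ref{completion:lemma}. Build the square one row at a time; having filled rows $1,\dots,k$ consistently, the number of admissible choices for row $k+1$ is $\operatorname{per}(B_k)$, where $B_k$ is the $n\times n$ availability matrix whose $(i,j)$ entry is $1$ iff symbol $i$ has not yet appeared in column~$j$ of the partial square. Since each of the first $k$ rows is a permutation of $\{1,\dots,n\}$ and each column of the partial square has $k$ distinct entries, every row sum and every column sum of $B_k$ equals $n-k$. Writing $L_k$ for the number of $k\times n$ Latin rectangles, we have $L_0=1$, $L_n=|LS_n|$, and $L_{k+1}=\sum_R \operatorname{per}(B_k(R))$, the sum ranging over $k\times n$ Latin rectangles $R$. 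The two bounds then come from inserting an upper, respectively lower, estimate for $\operatorname{per}(B_k)$ and telescoping over $k=0,1,\dots,n-1$.

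For the upper bound this is Lemma~\ref{completion:lemma} at $k=0$: since nothing is prescribed, $|LS_n|=C_{n,0}$, and~\eqref{Cnk} gives $|LS_n|\le\prod_{\ell=1}^{n}(\ell!)^{n/\ell}=\prod_{k=1}^{n}(k!)^{n/k}$. Equivalently, the permanent upper bound \cite[Theorem~11.5]{vanLintWilson} gives $\operatorname{per}(B_k)\le\bigl((n-k)!\bigr)^{n/(n-k)}$, and multiplying these factors for $k=0,\dots,n-1$ telescopes to the stated product, so the upper bound needs nothing new.

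For the lower bound I would estimate $\operatorname{per}(B_k)$ from below. Because $B_k$ is nonnegative with all line sums equal to $n-k$, the matrix $\tfrac{1}{n-k}B_k$ is doubly stochastic, so the van der Waerden permanent inequality (Egorychev--Falikman) yields $\operatorname{per}\!\bigl(\tfrac{1}{n-k}B_k\bigr)\ge n!/n^n$, i.e.\ $\operatorname{per}(B_k)\ge (n-k)^n\,n!/n^n$; this is the estimate that produces the constant in the theorem. Then $L_{k+1}\ge L_k\,(n-k)^n n!/n^n$, and iterating from $L_0=1$ gives
\[
|LS_n|\ \ge\ \prod_{k=0}^{n-1}(n-k)^n\,\frac{n!}{n^n}\ =\ (n!)^n\cdot\frac{(n!)^n}{n^{n^2}}\ =\ \frac{(n!)^{2n}}{n^{n^2}},
\]
where I used $\prod_{k=0}^{n-1}(n-k)^n=(n!)^n$.

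The main obstacle is entirely in this last permanent estimate: the clean constant $n!/n^n$ is precisely the van der Waerden bound, which is a deep theorem, so the sensible route is to cite it rather than reprove it — an elementary substitute (for instance, decomposing the $(n-k)$-regular bipartite availability graph into perfect matchings) produces a weaker constant and would miss this exact bound. The only other thing to be careful about is the combinatorial bookkeeping: that $B_k$ has constant column sums as well as row sums (this uses the column constraint of a Latin rectangle), and that in the row-by-row count every partial object really is a genuine Latin rectangle, so that the per-step factor is exactly $\operatorname{per}(B_k)$ with no completability caveat attached.
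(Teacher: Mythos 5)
Your proof is correct, and it is essentially the argument of the cited source \cite{vanLintWilson}, which is also the approach the paper itself already uses for the upper bound in Lemma~\ref{completion:lemma} (the Br\'egman permanent bound applied row by row, telescoped from $k=0$). The lower bound likewise matches the standard proof: the availability matrix $B_k$ has all line sums $n-k$, so the Egorychev--Falikman (van der Waerden) bound gives $\operatorname{per}(B_k)\ge (n-k)^n\,n!/n^n$, and your telescoping and bookkeeping (constant column sums, per-step factor exactly $\operatorname{per}(B_k)$) are all in order, with that deep permanent inequality appropriately cited rather than reproved.
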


\begin{theorem}\label{Latin square theorem}
Algorithm~\ref{Sudoku:n} samples uniformly over the set of all Latin squares of order $n$.
The number of times to sample rows $R_2,\ldots, R_{n-3}$ is geometrically distributed with expected value 
\[E_n := \frac{\lfloor 6^{n/3}2^{n/2}\rfloor \left([n!/e]\right)^{n-4}}{|LS_n|/n!}, \qquad n \geq 5.\]
\end{theorem}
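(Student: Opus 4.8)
The plan is to verify two things: first that Algorithm~\ref{Sudoku:n} returns a uniform sample from $LS_n$, and second that the stated formula $E_n$ is the mean of the geometric random variable counting how many times we generate the batch $(R_2,\ldots,R_{n-3})$ before acceptance. For uniformity, I would follow exactly the template used in the proof of Theorem~\ref{Sudoku theorem}: identify the PDC decomposition $A=(R_1,\ldots,R_{n-3})$, $B=(R_{n-2},R_{n-1},R_n)$, with $h(A,B)=1$ iff the concatenation is a valid Latin square. Conditioning $R_1=(1,\ldots,n)$ and then taking each of $R_2,\ldots,R_{n-3}$ i.i.d.\ uniform among fixed-point-free permutations is sampling $\cL(A)$ restricted to the event that no column repeats (a sub-event that costs nothing since the algorithm simply re-draws the whole batch on any violation via the \textbf{Goto}); the rejection with probability $1-d/\lfloor 6^{n/3}2^{n/2}\rfloor$ in Line~\ref{line:rejection:LS} is exactly the von~Neumann step of Algorithm~\ref{PDC procedure von Neumann}, with $\alpha = \lfloor 6^{n/3}2^{n/2}\rfloor$ serving as a valid (if non-optimal) universal upper bound on the number of completions. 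Here I invoke Lemma~\ref{completion:lemma}, part~1 (at least one completion always exists, so the accept region is nonempty and $\P(h(a,B)=1)>0$) and part~2 with $k=n-3$, which gives $C_{n,n-3}\le \prod_{\ell=1}^{3}(\ell!)^{n/\ell} = 2^{n/2} 6^{n/3}$, so that $d/\lfloor 6^{n/3}2^{n/2}\rfloor \le 1$ always and the rejection probability is well-defined. Then Line~\ref{line:complete} samples $\cL(B\mid h(a,B)=1)$ by brute-force uniform choice among the $d$ completions, and the final two lines apply a uniformly random row permutation and a uniformly random relabelling of symbols; since every Latin square has exactly one reduced-type representative reachable this way (first row $(1,\ldots,n)$, rows $2,\ldots,n-3$ fixed-point-free — which is automatic once the first row is the identity), the orbit-averaging argument shows the output is uniform on all of $LS_n$. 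I would state this last point carefully: the set of squares with first row the identity is in bijection-up-to-row-permutation-and-relabelling with $LS_n$, and the PDC lemma guarantees uniformity within that set.

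For the run-time, Lemma~\ref{rejection lemma} says the number of batch generations before acceptance is geometric with mean $1/C$ where $C = \left(\max_a f(a)/g(a)\right)^{-1}$, $g$ the sampling density on $(R_2,\ldots,R_{n-3})$ and $f$ the target. The sampling density is uniform over the $([n!/e])^{n-4}$ tuples of fixed-point-free permutations — using $|\,\{$fixed-point-free permutations of $[n]\}\,| = [n!/e]$, the nearest integer to $n!/e$, which is the standard derangement count. So $g(a) = 1/([n!/e])^{n-4}$ for each admissible $a$. The target density is $f(a) = (\#\text{completions of }a)/(|LS_n|/n!)$, the denominator being the number of reduced-type Latin squares, i.e.\ those with first row the identity. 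The acceptance probability for a given $a$ in the algorithm is $(\#\text{completions of }a)/\lfloor 6^{n/3}2^{n/2}\rfloor$, and combining this with the fact that acceptance is exact (Lemma~\ref{rejection lemma}, unbiasedness) forces $1/C = \lfloor 6^{n/3}2^{n/2}\rfloor \cdot ([n!/e])^{n-4} / (|LS_n|/n!)$, which is precisely $E_n$. A cleaner way to see it: the overall probability that one batch is both column-consistent and survives the von~Neumann rejection equals $\sum_a g(a)\cdot (\#\text{completions of }a)/\lfloor 6^{n/3}2^{n/2}\rfloor$, and $\sum_a (\#\text{completions of }a)$ over all column-consistent fixed-point-free tuples $a$ counts every reduced-type Latin square exactly once, hence equals $|LS_n|/n!$; dividing through yields mean $E_n$ for the geometric count.

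The main obstacle I anticipate is bookkeeping the symmetry reduction correctly — i.e.\ confirming that the normalization is $|LS_n|/n!$ and not, say, $|LS_n|/(n!\,)^2$ or $|LS_n|$. One must check that fixing $R_1=(1,\ldots,n)$ and then requiring $R_2,\ldots,R_{n-3}$ fixed-point-free does \emph{not} over- or under-count: given the identity first row, a second-or-later row being fixed-point-free is equivalent to it being column-consistent with $R_1$, so this is not an extra restriction, it is the column constraint itself; thus the accessible objects are exactly the Latin squares whose first row is the identity, of which there are $|LS_n|/n!$, and the later random permutation of rows together with the relabelling of symbols spreads the output uniformly over $LS_n$. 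I would also want to double-check the exponent $n-4$: rows $R_2$ through $R_{n-3}$ number $(n-3)-2+1 = n-4$, which matches. Everything else is a direct substitution into Lemma~\ref{rejection lemma} and requires no estimation; the bounds on $E_n$ in terms of Theorem~\ref{LS:bounds} would be a routine corollary, though the theorem as stated only asks for the exact expression.
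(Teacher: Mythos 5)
Your proposal is correct and follows essentially the same route as the paper, which simply normalizes $|LS_n|$ by $n!$ for the fixed first row, notes the $([n!/e])^{n-4}$ proposal space of fixed-point-free rows, and defers to the argument of Theorem~\ref{Sudoku theorem} (Lemma~\ref{rejection lemma} with the bound of Lemma~\ref{completion:lemma} at $k=n-3$ giving $2^{n/2}6^{n/3}$). Your version is in fact more explicit than the paper's, including the identity $\sum_a d(a)=|LS_n|/n!$ that pins down the acceptance probability; the only loose phrase is ``exactly one reduced-type representative,'' since each Latin square actually arises from $n!$ pairs of row/symbol permutations, but this constant multiplicity is exactly what makes the final symmetrization uniform.
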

\begin{proof}
We note that $|LS_n|$ is normalized by $n!$ since we are taking the first row to be the permutation $(1,\ldots, n)$.  
Each of the rows $R_i$, $i=2,\ldots, n-3$, is sampled from the set of all fixed-point-free permutations, of which there are $[n!/e]$ such permutations, and our sample space is the set of all $(n-4)$--tuples of fixed-point-free permutations.  
The proof then proceeds in the same fashion as the proof of Theorem~\ref{Sudoku theorem}. 
\end{proof}

A table of values is below, calculated from the exact quantities and rounded to a few significant digits; the values for $|LS_n|$  can be found in the recent survey~\cite{Stones}.
In addition, we calculate the asymptotic rate of increase of $E_n^{1/n^2}$ in Proposition~\ref{E:growth}. 
\[
\begin{array}{|c|c|c|}\hline
n & |LS_n| & E_n \\ \hline\hline
 5 & 1.61\times 10^5 & 8.15 \\
 6 & 8.13\times 10^8 & 1.32 \times 10^2 \\
 7 & 6.15\times 10^{13} & 1.10 \times 10^4 \\
 8 & 1.09\times 10^{20} & 4.26\times 10^6 \\
 9 & 5.52\times 10^{27} & 8.41\times 10^9 \\
 10 & 9.98\times 10^{36} & 8.79\times 10^{13} \\
 11 & 7.77\times 10^{47} & 5.03\times 10^{18} \\ \hline
\end{array}
\]

\begin{proposition}\label{E:growth}
We have 
\[ E_n^{1/n^2} \sim e. \]
\end{proposition}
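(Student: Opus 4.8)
The plan is to take logarithms and show that $\frac{1}{n^2}\log E_n \to 1$. Writing
\[
E_n = \frac{\lfloor 6^{n/3}2^{n/2}\rfloor\,([n!/e])^{n-4}}{|LS_n|/n!},
\]
we split $\frac{1}{n^2}\log E_n$ into three contributions: the prefactor $\lfloor 6^{n/3}2^{n/2}\rfloor$, the numerator $([n!/e])^{n-4}\cdot n!$, and the denominator $|LS_n|$. First I would observe that $\log\lfloor 6^{n/3}2^{n/2}\rfloor = \frac{n}{3}\log 6 + \frac{n}{2}\log 2 + O(1) = O(n)$, so after dividing by $n^2$ this term vanishes in the limit and contributes nothing. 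Likewise the single extra factor of $n!$ in the numerator contributes $\frac{\log n!}{n^2} = O\!\left(\frac{n\log n}{n^2}\right) \to 0$, and replacing $[n!/e]$ by $n!/e$ changes $\log([n!/e])$ by $O(1)$, hence changes $\frac{(n-4)\log([n!/e])}{n^2}$ by $O(1/n) \to 0$. So the only surviving terms are $\frac{(n-4)\log(n!/e)}{n^2}$ from the numerator and $-\frac{\log|LS_n|}{n^2}$ from the denominator.

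Next I would handle the numerator term: $(n-4)\log(n!/e) = (n-4)(\log n! - 1)$, and by Stirling $\log n! = n\log n - n + O(\log n)$, so $(n-4)\log(n!/e) = (n-4)(n\log n - n + O(\log n)) = n^2\log n - n^2 + o(n^2\log n)$; dividing by $n^2$ gives $\log n - 1 + o(\log n)$. For the denominator I would invoke Theorem~\ref{Ln:asymptotic}, which states $|LS_n|^{1/n^2} \sim e^{-2}n$, i.e. $\frac{1}{n^2}\log|LS_n| = \log(e^{-2}n) + o(1) = \log n - 2 + o(1)$. (One should double-check that the error term in Theorem~\ref{Ln:asymptotic} is $o(1)$ after taking logs of the $1/n^2$ power, which it is, since $a_n \sim b_n$ with both positive gives $\log a_n - \log b_n \to 0$.) Combining,
\[
\frac{1}{n^2}\log E_n = \bigl(\log n - 1\bigr) - \bigl(\log n - 2\bigr) + o(\log n) \qu\text{— wait, the error needs care}
\]
so I must be more careful: the $o(n^2\log n)$ slack in the Stirling step, once divided by $n^2$, is only $o(\log n)$, which is not good enough to conclude. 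I would therefore use the sharper form $\log n! = n\log n - n + \tfrac12\log(2\pi n) + O(1/n)$, giving $(n-4)(\log n! - 1) = n^2\log n - n^2 + O(n\log n)$, so after dividing by $n^2$ the error is genuinely $O\!\left(\frac{\log n}{n}\right) \to 0$. Then $\frac{1}{n^2}\log E_n \to (\log n - 1) - (\log n - 2)$ — but this still has an $n$-dependent piece; the point is that the $\log n$ terms cancel exactly, leaving $-1 - (-2) = 1$. Hence $\frac{1}{n^2}\log E_n \to 1$, i.e. $E_n^{1/n^2} \to e$, which is the claim.

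The main obstacle is purely bookkeeping: making sure every error term, once divided by $n^2$, actually tends to $0$, and in particular that the two $\log n$ contributions (one from Stirling, one from Theorem~\ref{Ln:asymptotic}) cancel before any limit is taken, so that no divergent $\log n$ is left over. The one genuine input beyond elementary estimates is Theorem~\ref{Ln:asymptotic}; everything else is Stirling's formula and the crude bound on $\lfloor 6^{n/3}2^{n/2}\rfloor$. I would present the argument by writing $\log E_n$ as a sum of four explicitly-bounded pieces and then dividing by $n^2$ and passing to the limit termwise.
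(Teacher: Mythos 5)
Your argument is correct and is essentially the paper's proof: both rest on Stirling's formula ($n!^{1/n}\sim n/e$) and Theorem~\ref{Ln:asymptotic} ($|LS_n|^{1/n^2}\sim e^{-2}n$), with the prefactor $\lfloor 6^{n/3}2^{n/2}\rfloor$, the extra $n!$, and the rounding in $[n!/e]$ all negligible at the $1/n^2$ scale. You simply carry out the computation additively in logarithms with explicit error bookkeeping, whereas the paper writes the same cancellation multiplicatively as $E_n^{1/n^2}\sim\frac{n/e}{n/e^2}=e$.
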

\begin{proof}
Recall Stirling's formula, which states that as $n$ tends to infinity, we have 
\[ n! \sim \frac{n^n}{e^n} \sqrt{2\pi\, n}\, , \]
so 
\[ n!^{1/n} \sim \frac{n}{e}\, . \]
We then have
\[ E_n^{1/n^2}  = \left(\frac{\lfloor 6^{n/3}2^{n/2}\rfloor \left([n!/e]\right)^{n-3}}{|LS_n|}\right)^{1/n^2}  \sim \frac{6^{1/(3n)}2^{1/(2n)} \left(n!/e\right)^{1/n-3/n^2}}{|LS_n|^{1/n^2}}  \sim \frac{n/e}{n/e^2}  \sim e.
\]
\end{proof}

These values indicate that the rejection probability is the dominating aspect of the computation. 
 See Section~\ref{other:PDC:algorithms:LS} for other ways to improve upon this rejection cost.

\subsection{Alternative PDC parameterizations for Latin squares}
\label{other:PDC:algorithms:LS}

Let $U_{n,k} := \prod_{\ell=1}^{n-k} (\ell!)^{n/\ell}$ denote the right-hand side of inequality~\eqref{Cnk}, i.e., an upper bound for the number of completions of a $k \times n$ Latin rectangle. 

Algorithm~\ref{Sudoku:n} can be generalized with a parameter $k \geq 2$  so that rows $R_2, \ldots, R_{k}$ are sampled in Line~\ref{line:d:LS} for the first step in PDC.  
In this case, the rejection probability in Line~\ref{line:rejection:LS} may be replaced with $d / U_{n,k}$, and the number of times to resample has expected value
\[  E_{n,k} := \frac{U_{n,k}\, [n!/e]^{n-k-1}}{L_n/n!}, \qquad n \geq 5, 1 \leq k \leq n-4.\]
We estimate the log of this rejection cost asymptotically as $n$ tends to infinity.  

\begin{proposition}\label{asymptotic:Enk}
For 
\begin{itemize}
\item $k = o(n)$, we have 
\[ \log E_{n,k} \sim n^2 \log(n);\]
\item $k \sim (1-t)\, n$, for any $0<t<1$, we have 
\[ \log E_{n,k} \sim (2t-1) n^2 \log(n);\]
\item $k = n - r\, n^\alpha$, for any $0<r<1$, and $0<\alpha<1$, we have 
\[ \log E_{n,k} \sim r(1+\alpha)n^{1+\alpha} \log(n) - n^2 \log(n). \]
\end{itemize}
\end{proposition}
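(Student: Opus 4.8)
The plan is to take logarithms of the exact expression
\[ E_{n,k} = \frac{U_{n,k}\,[n!/e]^{n-k-1}}{|LS_n|/n!} \]
and estimate each of the three factors asymptotically, keeping track of which term dominates in each of the three regimes for $k$. Write
\[ \log E_{n,k} = \log U_{n,k} + (n-k-1)\log[n!/e] + \log(n!) - \log|LS_n|. \]
For the last two terms I would use Theorem~\ref{Ln:asymptotic}, which gives $\log|LS_n| = n^2(\log n - 2) + o(n^2\log n)$, so that $\log(n!) - \log|LS_n| = -n^2\log n + O(n^2)$; note this contributes $-n^2\log n$ to the leading order \emph{in every regime}. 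For the middle term, Stirling gives $\log[n!/e] = n\log n - n + O(\log n)$, hence $(n-k-1)\log[n!/e] = (n-k)n\log n + O((n-k)n)$. So the whole story reduces to understanding $\log U_{n,k}$ and combining.

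The main computational step is estimating $\log U_{n,k} = \sum_{\ell=1}^{n-k} \frac{n}{\ell}\log(\ell!)$. Using $\log(\ell!) = \ell\log\ell - \ell + O(\log\ell)$, one gets $\frac{n}{\ell}\log(\ell!) = n\log\ell - n + O\!\big(\frac{n\log\ell}{\ell}\big)$, so
\[ \log U_{n,k} = n\sum_{\ell=1}^{n-k}\log\ell - n(n-k) + O\!\Big(n\sum_{\ell=1}^{n-k}\tfrac{\log\ell}{\ell}\Big) = n\log((n-k)!) - n(n-k) + O(n\log^2(n-k)). \]
Applying Stirling once more, $n\log((n-k)!) = n(n-k)\log(n-k) - n(n-k) + O(n\log(n-k))$, so
\[ \log U_{n,k} = n(n-k)\log(n-k) - 2n(n-k) + O(n\log^2(n-k)). \]
The key cancellation is that $\log U_{n,k} + (n-k-1)\log[n!/e]$ has leading term $n(n-k)\log(n-k) + (n-k)n\log n$, and combined with the $-n^2\log n$ from $\log(n!)-\log|LS_n|$ we get
\[ \log E_{n,k} = n(n-k)\big(\log n + \log(n-k)\big) - n^2\log n + (\text{lower order}). \]

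Now I would plug in the three regimes. For $k = o(n)$: then $n-k \sim n$, so $\log(n-k) = \log n + o(1)$ and $n(n-k)(\log n + \log(n-k)) \sim 2n^2\log n$, giving $\log E_{n,k} \sim 2n^2\log n - n^2\log n = n^2\log n$; one must check the error terms are $o(n^2\log n)$, which they are since $(n-k)n\log n - n^2\log n = -kn\log n = o(n^2\log n)$ and all the $O$-terms above are $o(n^2\log n)$. For $k\sim(1-t)n$: then $n-k\sim tn$, $\log(n-k) = \log n + \log t + o(1)$, and $n(n-k)(\log n + \log(n-k)) \sim tn^2\cdot 2\log n = 2tn^2\log n$, so $\log E_{n,k}\sim 2tn^2\log n - n^2\log n = (2t-1)n^2\log n$. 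For $k = n - rn^\alpha$: then $n-k = rn^\alpha$, so $n(n-k) = rn^{1+\alpha}$, $\log(n-k) = \alpha\log n + \log r + o(\log n)$, hence $n(n-k)(\log n + \log(n-k)) \sim rn^{1+\alpha}(1+\alpha)\log n$, and $\log E_{n,k}\sim r(1+\alpha)n^{1+\alpha}\log n - n^2\log n$, exactly as claimed. The one point requiring care is that in the third regime $n^{1+\alpha} = o(n^2)$, so the $-n^2\log n$ term dominates, but the statement is phrased as a difference of two explicitly-sized terms, so one just needs both displayed terms to be captured correctly and all remaining errors to be $o(n^{1+\alpha}\log n)$ — the $O(n\log^2(n-k)) = O(n\alpha^2\log^2 n)$ term is indeed $o(n^{1+\alpha}\log n)$, and the Stirling remainder $O(n\log(n-k)) = O(n\log n) = o(n^{1+\alpha}\log n)$.

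The main obstacle I anticipate is bookkeeping rather than conceptual: one must be careful that "$\sim$" in the statement means the ratio tends to $1$, so in each regime the \emph{entire} right-hand side must be the genuine leading-order asymptotic, and in the third regime that means verifying that no error term competes with $n^{1+\alpha}\log n$ — in particular the term $(n-k)n\log n - n^2\log n = -kn\log n$ needs to be reconciled, but $-kn\log n = -(n - rn^\alpha)n\log n = -n^2\log n + rn^{1+\alpha}\log n$, so it already folds cleanly into the two stated pieces. A secondary subtlety is the replacement of $\log|LS_n|$ by its Theorem~\ref{Ln:asymptotic} asymptotic: since that theorem only controls $|LS_n|^{1/n^2}$, it gives $\log|LS_n| = n^2(\log n - 2)(1+o(1))$, i.e. $\log|LS_n| = n^2\log n - 2n^2 + o(n^2\log n)$, and one should confirm the $o(n^2\log n)$ slack is harmless in the first two regimes (it is, since those leading orders are $\Theta(n^2\log n)$ with a nonzero constant when $t\neq 1/2$) — the boundary case $t = 1/2$, where the leading term vanishes, is not claimed by the proposition and needs no treatment.
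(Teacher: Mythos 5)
Your proposal is correct and takes essentially the same route as the paper: both reduce the problem to the leading-order combination $\log E_{n,k} \sim n(n-k)\log(n-k) + n(n-k)\log(n) - n^2\log(n)$ (the paper citing the van Lint--Wilson estimates $\log U_{n,k} \sim n(n-k)\log(n-k)$ and $\log |LS_n| \sim n^2\log(n)$ where you rederive them by Stirling's formula) and then carry out the same three-case substitution. One small remark on your third case: since ``$\sim$'' only requires the ratio to the \emph{entire} right-hand side (itself $\sim -n^2\log n$) to tend to $1$, errors of size $O(n^2)$ --- such as the $-2n^2$ hidden in $\log|LS_n|$ --- are already harmless, so your stricter requirement that every error be $o(n^{1+\alpha}\log n)$ is both unnecessary and not actually satisfied by those $O(n^2)$ terms; the proposition as stated still follows.
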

\begin{proof}
To estimate $\log E_{n,k}$, we note that the proof of~\cite[Theorem~17.3]{vanLintWilson} contains an approach to estimate both $\log |LS_n|$ and $\log|U_{n,k}|$; that is, we have  
\[ \log |LS_n| \sim n^2 \log(n), \]
and in a similar fashion we obtain 
\[ \log U_{n,k} \sim n(n-k) \log(n-k). \]

Thus, asymptotically as $n$ and $n-k$ tend to infinity, we have 
\begin{align*} 
\log E_{n,k}  & = \log U_{n,k} + (n-k)\log(n!) - (n-k-1) - \log |LS_n| \\
    & \sim n\, (n-k) \log(n-k) + (n-k) (n \log(n)) - n^2 \log(n).
\end{align*}
The proposition now follows by a straightforward calculation. 
\end{proof}

There are several noteworthy aspects of the asymptotic analysis in Proposition~\ref{asymptotic:Enk}. 
First, when $k = o(n)$, there is asymptotically as much uncertainty in the first $k$ rows as there are in the entire ensemble of Latin squares of order $n$. 
Next, taking $k = n/2$, i.e., $t=2$, is a natural cutoff for when the uncertainty is no longer on the same exponential order as the entire ensemble, and indeed it is plausible that a self-similar PDC algorithm using this division may achieve an asymptotically optimal rejection rate. 
Finally, the final item in Proposition~\ref{asymptotic:Enk} is an example of a low-rank parameterization (see for example~\cite{PIPARCS}), and yields the greatest immediate potential for generalizing the presented algorithms in a way which is still analytically tractable. 

Of course, even with these divisions, one still has to sample from the remaining set of rows $R_{k+1}, \ldots, R_n$, which for $k$ moderately large is still a daunting task. 

One can also apply PDC more generally, not just to $k \times n$ Latin rectangles, by filling in any amount of partial information. 
There are many enticing partial results in this area, see~\cite[Theorem~17.4]{vanLintWilson} for an example which fills in an upper left rectangle, and~\cite[Theorem~17.5]{vanLintWilson} for an example which fills in an upper triangular section of entries. 
There are also many cautionary theoretical results as well, see~\cite[Figure~17.4]{vanLintWilson} and~\cite{colbourn1984complexity}. 

As a final note for this section, it is typical to reduce a Latin square by assuming the first row \emph{and} the first column are the identity permutation.  The number of such reduced Latin squares is then $|LS_n|/(n! (n-1)!)$, and it is often easier to work with this reduced set. 
This reduction could also be exploited for PDC. 
It is valid to replace Lines~1--3 in Algorithm~\ref{Sudoku:n} with the following:

1: Let $R_1 = (1,\ldots,n)$. \\
2: {\bf for} $i=2,\ldots, n-3$  \\
3: Generate $R_i$ uniformly from the set of fixed--point free permutations starting with $i$. \\

This restriction forces the first $n-3$ entries of column 1 to be the identity permutation $(1,2,\ldots,n-3)$ (and one could easily force the remaining entries in the column to be $(n-2, n-1, n)$ ). 
However, for $2 \leq i \leq n-3$, the set of all fixed--point free permutations of $n$ starting with an $i$ is given by the OEIS sequence $A000255$~\cite{OEIS}. 
In~\cite[Page~373]{Flajolet}, it is shown that the cardinality of this set is asymptotically $n!/e$, which is asymptotically the same as the number of fixed--point free permutations of $n$ that we have used in our current analysis.

\section{Other sampling algorithms}\label{sect alt}
\label{section:other}

\subsection{The $R \times C$ generalization of a Sudoku matrix}
\label{Sudoku:RC}
There is a generalization to a Sudoku matrix where the block constraint is modified to be a block with dimensions $R \times C$. 
The set of $(R,C)$-Sudoku matrices is then defined as the set of $R\,C \times R\, C$ matrices tiled with the $R \times C$ blocks, where each row, column, and $R\times C$ block is a permutation of $\{1, 2, \ldots, R\,C\}$. 
Sudoku matrices are the special case with $R=C=3$. 

In fact, Algorithm~\ref{Sudoku:n} is worded so that it can be applied to $(R,C)$-Sudoku matrices \emph{varbatim} using $n=R\,C$: one simply interprets Line~\ref{line:d:LS} and Line~\ref{line:complete} analogously. 
Even the denominator of the rejection probability, i.e., $\lfloor 6^{n/3} 2^{n/2}\rfloor$, can be used as an upper bound on the number of completable $(R,C)$-Sudoku matrices. 
Of course, there is some efficiency lost in the algorithm by using this generic upper bound, however, as was pointed out earlier, it does not affect the unbiased nature of the algorithm, only the efficiency, and should more efficient bounds become available, one could (and should!) substitute in those bounds instead.

\subsection{$S$-permutation matrices}
\label{S:permutation}
A recent approach to random sampling of Latin squares and Sudoku matrices is via permutation matrices, see~\cite{Dahl}; see also \cite{FontanaFractions, Fontana, YordzhevNumber}.  
A permutation matrix of order~$N$ is an $N \times N$ matrix with a exactly one 1 appearing in each row and column, and the rest of the entries are 0. 
An \emph{$S$-permutation matrix of order $N = n^2$} is an $N \times N$ permutation matrix with the additional constraint that only one~$1$ appears in each of the $n \times n$ sub-blocks indicated by the generalization in Section~\ref{Sudoku:RC} for $(n,n)$-Sudoku matrices. 
Central to this idea is the fact that each $(n,n)$-Sudoku matrices $A$ can be written uniquely as 
\begin{equation}\label{Sudoku:decomposition} A = 1\cdot P_1 + 2\cdot P_2 + \cdots + N\cdot P_N, \end{equation}
where $P_1, \ldots, P_N$ are each permutation matrices of order~$N$, with the additional property of being \emph{disjoint}, i.e., the supports of each matrix are pairwise disjoint. 
It was shown in~\cite[Proposition~1]{Dahl} that the number of $S$-permutation matrices is precisely $n!^{2n}$. 
In~\cite[Theorem~1 and Theorem~2]{YordzhevNumber}, an exact formula is given using inclusion-exclusion for the number of \emph{pairs} of disjoint $S$-permutation matrices. 

Permutation matrices were exploited in~\cite{Fontana} for the random sampling of Latin squares of order~$N$, using the same decomposition in~\eqref{Sudoku:decomposition} but using just permutation matrices of order $N$ without the additional block constraint. 
The sampling algorithm in~\cite[Section~3]{Fontana} generates a certain undirected graph whose largest cliques correspond to permutation matrices in~\eqref{Sudoku:decomposition} used to construct a Latin square. 
It is the creation of this undirected graph along with the finding of all cliques which currently dominates the computation. 
For Latin squares of order $n$, the procedure was shown to be an effective method of uniform sampling for $n \leq 7$.

\subsection{Importance sampling}
\label{section:importance}
In~\cite{DeSalvoSudoku}, a backtracking algorithm for Sudoku matrices is presented as follows: 
\begin{itemize}
\item Generate the first row as a random permutation of $\{1,\ldots, 9\}$.
\item Simulate each subsequent row one at a time, and left to right each entry at a time, with $R_{i,j}$ denoting the set of elements in row $i$ and column $j$ which can be placed without violating the Sudoku conditions, by selecting an element from $R_{i,j}$ uniformly at random at each stage. 
\item If for any $2 \leq i \leq 9$ and $1 \leq j \leq 9$ we have $R_{i,j} = \emptyset$, then delete the current row and the preceding row, and continue.
\end{itemize}
This algorithm was used to generate a sample size of $10^8$ in~\cite{DeSalvoSudoku}. 
While no quantitative bounds on the bias were proved, the algorithm completed quickly, always producing a valid Sudoku matrix.

In~\cite{RidderIS}, a similar algorithm is championed, except if the algorithm encounters $R_{i,j} = \emptyset$ for any $i$ and $j$, the algorithm halts. 
The advantage this algorithm has is that one can more easily keep track of the importance sampling weight, and hence establish an approximation for the number of Sudoku matrices as follows. 
If we assume that each row is a uniform permutation of the set $\{1, \ldots, 9\}$, then each entry in column $j$ of a Sudoku matrix can be one of $10-j$ possible values, depending on which of the first $j$ elements were placed first. 
Whereas in the algorithm we select an element uniformly from the set $R_{i,j}$ of size $|R_{i,j}| \leq 10-j$, we correct for this via the likelihood ratio
\[ \ell_{i,j} = \frac{|R_{i,j}|}{10-j}, \qquad R_{i,j} \neq 0,\]
with $\ell_{i,j} = 0$ if $R_{i,j} = \emptyset$. 
The importance sampling weight is then the product of these ratios, i.e., 
\[ \prod_{i=2}^9 \prod_{j=1}^9 \ell_{i,j}. \]
This was used to estimate the approximate number of Sudoku matrices, obtaining $6.662 \cdot 10^{21}$ as a point estimate. 
This algorithm can also be adapted to larger numerical tables, e.g., $(n,n)$-Sudoku matrices, but as the author points out, the probability of generating a feasible matrix, i.e., the probability that the algorithm does not halt before completion, becomes increasingly small.

\subsection{Markov chain approaches}

In~\cite{JacobsonMatthews}, two Markov chains on the set of Latin squares of order~$n$ are presented which involve rather intricate sets of transformations to transition from one state to the next, always yielding a valid Latin square of order~$n$. 
It is proved in~\cite[Theorem~7]{JacobsonMatthews} that the stationary distribution is indeed the uniform distribution, and that transitioning from one state to another requires $O(n)$ arithmetic operations. 
However, as indicated in~\cite[Section~6]{JacobsonMatthews}, there is no proof that the chain is rapidly mixing.  

A Markov chain for $(n,n)$-Sudoku matrices was recently introduced in~\cite{SudokuMC}, via a connection with the set of binary contingency tables (the set of matrices with entries in $\{0,1\}$ with row sums and column sums specified), and then by finding a Markov basis, see~\cite{DiaconisSturmfels}. 
The Markov basis ensures that all states are reachable, however, the technique is admittedly not feasible for the standard $(3,3)$-Sudoku matrices, and instead it is explored using $(2,2)$-Sudoku matrices. 
While an intriguing and unique approach to random sampling, there is also no proof that the chain is rapidly mixing.

\section{Final Remarks}\label{final:remarks}
Our motivation for these algorithms is to verify a claim from \cite{DeSalvoSudoku}, in which the authors calculated and compared the Shannon entropy of a random sample of Sudoku matrices and Latin squares of order $9$ using a backtracking algorithm, see Section~\ref{section:importance}.  
This algorithm is fast, though not necessarily unbiased, and allowed the authors to generate a sample of size $10^8$ that was used in the estimation of Shannon's entropy.
For Sudoku matrices, the entropy was estimated as $1.73312 \pm 0.000173$, and for Latin squares of order $9$ the entropy was estimated as $1.73544\pm 0.0001735$.
Assuming the bias in the backtracking algorithm is relatively small, this calculation suggests that a statistical error of at most $10^{-3}$ is required in order to definitively distinguish a difference in entropies between the set of Sudoku matrices and the set of Latin squares of order $9$, and so we anticipate needing an \emph{unbiased} sample of size at least $10^6$ to effectively distinguish between the entropies of these two sets of matrices.

While Algorithm~\ref{Sudoku} and Algorithm~\ref{Sudoku:n} are provably unbiased, they each took approximately 24 hours to generate a sample of size 1000 on a personal computer\footnote{1.8 GHz Intel Core i7.} (using $n=9$ in Algorithm~\ref{Sudoku:n}).  
Using this i.i.d. sample of size 1000, an unbiased estimate for the entropy of Sudoku matrices is $1.73356 \pm 0.0383708$, and for Latin squares of order 9 our estimate for the entropy is $1.73335 \pm 0.0389771$, which is consistent with the backtracking algorithm.

We have supplied C++ source code used to generate the samples and posted it at \\
\url{https://github.com/stephendesalvo}, along with the files containing the random samples of size 1000, and scripts to load in the matrices into an $n\times n \times 1000$ array in {\tt Matlab} and a $1000 \times n \times n$ array in {\tt Mathematica}.

\section{Acknowledgements}
The author would like to acknowledge helpful conversations with James Zhao and Edo Liberty, and also helpful comments from an anonymous reviewer.

\bibliographystyle{plain}
\bibliography{../../../master_bib}

\end{document}